\DeclareMathOperator*\inte{int}%
\DeclareMathOperator*\ri{ri}%
\DeclareMathOperator*\rint{rint}%
\DeclareMathOperator*\cl{cl}%
\DeclareMathOperator*\ic{ic}%
\DeclareMathOperator*\epi{epi}%
\DeclareMathOperator*\dom{dom}%
\DeclareMathOperator*\aff{aff}
\DeclareMathOperator*\cone{cone}%
\DeclareMathOperator*\core{core}%
\DeclareMathOperator*\icr{icr}%
\DeclareMathOperator*\co{co}%
\DeclareMathOperator*\id{id}%
\DeclareMathOperator*\im{Im}%
\DeclareMathOperator*\pr{pr}%
\newtheorem{thm}{Theorem}[section]
\newtheorem{lem}{Lemma}[section]
\newtheorem{prop}{Proposition}[section]
\newtheorem{cor}{Corollary}[section]
\newtheorem{defn}{Definition}[section]
\newtheorem{ex}{Example}[section]
\newtheorem{rmk}{Remark}[section]
\newcommand{\Real}{{\mathbb R}}
\newcommand{\oReal}{{\overline{\mathbb R}}}
\newcommand{\To}{\longrightarrow}
\def\1{\^{\i}}
\def\2{\u{a}}
\def\3{\c{s}}
\def\4{\^{a}}
\def\5{\c{t}}
\def\a{\alpha}
\def\b{\beta}
\def\d{\delta}
\def\l{\lambda}
\def\<{\langle}
\def\>{\rangle}
\title{On the generalized parallel sum of two maximal monotone operators of Gossez type (D)}
\author{Radu Ioan Bo\c{t} \thanks
{Faculty of Mathematics, Chemnitz University of Technology,
D-09107 Chemnitz, Germany, e-mail: radu.bot@mathematik.tu-chemnitz.de. Research partially supported by DFG (German Research Foundation), project WA 922/1-3.} \and Szil\'ard L\'aszl\'o \thanks {Faculty of Mathematics and Computer Science, Babe\c{s}-Bolyai University, Cluj-Napoca, Romania, e-mail: laszlosziszi@yahoo.com. Research done during the stay of the author in the academic year 2010/2011 at Chemnitz University of Technology as a guest of the Chair of Applied Mathematics (Approximation Theory). The author wishes to thank for the financial support provided from programs co-financed by The Sectoral Operational Programme Human Resources Development, Contract POSDRU 6/1.5/S/3 –- ``Doctoral studies: through science towards society''.}}
\begin{document}
\maketitle

\noindent \textbf{Abstract.} The generalized parallel sum of two monotone operators via a linear continuous mapping is defined as the inverse of the sum of the inverse of one of the operators and with inverse of the composition of the second one with the linear continuous mapping. In this article, by assuming that the operators are maximal monotone of Gossez type (D), we provide sufficient conditions of both interiority- and closedness-type for guaranteeing that their generalized sum via a linear continuous mapping is maximal monotone of Gossez type (D), too. This result will follow as a particular instance of a more general one concerning the maximal monotonicity of Gossez type (D) of an extended parallel sum defined for the maximal monotone extensions of the two operators to the corresponding biduals.\\

\noindent \textbf{Key Words.}  monotone operator, maximal monotone operator of Gossez type (D), representative function,
convex conjugate duality\\

\noindent \textbf{AMS subject classification.} 47H05, 46N10, 42A50

\section{Introduction and preliminaries}

Having two nonempty sets $A$ and $B$ and a multivalued operator $M :A\rightrightarrows B$, we denote by $G(M) =\{(a,b) \in A \times B : b \in M(a)\}$ its \textit{graph} and by $M^{-1} : B\rightrightarrows A$ the \textit{inverse operator} of $M$, which is the multivalued operator having as graph the set $G(M^{-1}) := \{(b,a) \in B \times A : (a,b) \in G(M)\}$. When $X$ is a real nonzero Banach space and $X^*$ its topological dual space,  the \textit{parallel sum} of two multivalued monotone operators $S,T:X\rightrightarrows X^*$ is defined as
$$S||T : X\rightrightarrows X^*,\ S||T(x) :=(S^{-1}+T^{-1})^{-1}(x) \ \forall x \in X.$$
This notion has been first considered in Hilbert spaces by Passty in \cite{Pa}, where the interested reader can find some practical interpretations of this notion including some preliminary investigations on the maximal monotonicity of the parallel sum of two maximal monotone operators. The latter problem was also addressed in Hilbert spaces in \cite{Mou} and in reflexive Banach spaces in \cite{At-Ch-Mou, Ri}, the weakest condition for the maximal monotonicity of the parallel sum available in the latter setting in the literature being recently introduced in \cite{Pen-Zal}. Since $S$ and $T$ are maximal monotone if and only if their inverse $S^{-1}$ and, respectively, $T^{-1}$ are maximal monotone, the sufficient conditions for the maximal monotonicity of $S||T$ in reflexive Banach spaces can be gathered from the ones formulated for the maximal monotonicity of the sum of two maximal monotone operators, applied to $S^{-1} + T^{-1}$.

When $Y$ is another real nonzero Banach space with $Y^*$ its topological dual space, $S:X\rightrightarrows X^*$ and $T:Y\rightrightarrows Y^*$ are two monotone operators and $A:X \rightarrow Y$ is a linear continuous mapping with adjoint mapping $A^*:Y^* \rightarrow X^*$, Penot and Z\u alinescu proposed in \cite{Pen-Zal} the following \textit{generalized parallel sums of $S$ and $T$ defined via $A$}
$$S||^A T:Y\rightrightarrows Y^*,\ S||^A T(y):=(A S^{-1}A^*+T^{-1})^{-1}(y) \ \forall y \in Y$$
and
$$S||_A T : X\rightrightarrows {X^*},\ S||_A T(x):= (S^{-1}+(A^*TA)^{-1})^{-1}(x) \ \forall x \in X,$$
respectively.
One can easily notice that when $X=Y$ and $A$ is the identity mapping on $X$, then they both collapse into $S||T$. As the monotonicity of $S$ and $T$ gives rise to the same property for $S||^A T$ and $S||_A T$, the question of how to guarantee for these maximal monotonicity, provided that $S$ and $T$ are maximal monotone, comes automatically.

This question was already addressed by Stephen Simons in \cite{Sim3} in general Banach spaces in what concerns the generalized parallel sum $S||^A T$. Under the assumption that $S$ and $T$ are maximal monotone operators of Gossez type (D), in the mentioned paper, interiority-type regularity conditions for ensuring that $S||^A T$ is a maximal monotone operator of Gossez type (D), too, have been formulated. Due to its nature,  at least in reflexive spaces, statements on the maximal monotonicity of the parallel sum $S||^A T$ and corresponding \textit{interiority-} and \textit{closedness-type} regularity conditions \textit{can be derived} from the statements given in the literature for the \textit{sum} of a monotone operator with the composition of a second one with a linear continuous mapping (see \cite{Pen-Zal1, Bo-Gra-Wa, Bo-Cse}).

On the other hand, the approach suggested above for $S||^A T$, regarding the direct derivation of sufficient conditions for maximal monotonicity from the already existent ones, cannot be applied to $S||_A T$ accordingly. This fact represented the starting point of the investigations made in this paper, where we want to provide \textit{interiority-} and \textit{closedness-type} regularity conditions for the maximal monotonicity of Gossez type (D) of $S||_A T$, whenever
$S$ and $T$ are maximal monotone operators of Gossez type (D).

The outline of the paper is the following. In the remaining of this section we recall some elements of convex analysis and introduce the necessary apparatus of notions and results referring to monotone operators in general Banach spaces. In Section 2 we investigate the fulfilment in an exact sense of a \textit{generalized bivariate infimal convolution formula} for which we provide, by making use of a special conjugate formula, equivalent closedness-type conditions, but also sufficient interiority-type ones. This formula represents the premise for ensuring in Section 3 maximal monotonicity of Gossez type (D) of a generalized parallel sum of the maximal monotone operators of Gossez type (D) $S$ and $T$, defined by making use of their extensions to the corresponding biduals. The maximal monotonicity of Gossez type (D) of $S||_A T$ will follow as a particular instance of this general result. A special attention will be also given to the formulation of further sufficient conditions for the interiority-type regularity condition and to the situation when these became equivalent. Finally, in Section 4, some particular instances, to which the general results on the maximal monotonicity of $S||_A T$  give rise, are considered.

\subsection{Elements of convex analysis}

Let $X$ a real separated locally convex space and $X^*$ its topological dual space. We denote by $w(X,X^*)$ (or, for short, $w$) the \textit{weak topology} on $X$ induced by $X^*$ and by $w(X^*,X)$ (or, for short, $w^*$) the weak$^*$ topology on $X^*$ induced by $X$. We denote by $\<x^*,x\>$ the value of the continuous linear functional $x^*\in X^*$ at $x\in X$. For a given set $D\subseteq X$, we denote by $\co D, \aff D, \inte D$ and $\cl D$, its \emph{convex hull, affine hull, interior} and \emph{closure}, respectively. When $Z \subseteq X$ is a given set we say that $D$ is \textit{closed regarding the set} $Z$ if $\cl D \cap Z = D \cap Z$. The \textit{conic hull} of the set $D$ will be denoted by $\cone D = \cup_{\lambda > 0} \lambda D$, while its \textit{relative interior} is defined as (see \cite{Zal-carte})
$$\ri D=\left\{
\begin{array}{ll}
\rint D, & \mbox {if } \aff D \mbox{ is a closed set},\\
\emptyset, & \mbox{otherwise},
\end{array}\right.$$
where $\rint D:=\inte_{\aff D} D$. The \emph{algebraic interior} (or \emph{core}) of $D$ is the set (see \cite{holmes, Rock-conj-dual, Zal-carte})
$$\core D=\{u\in X| \ \forall x\in X \ \exists \delta>0\mbox{ such that }\forall \lambda\in[0,\delta]:u+\lambda x\in D\},$$ while its \emph{relative algebraic interior} (or \emph{intrinsic core}) is the set (see \cite{holmes, Zal-carte})
$$\icr D=\{u\in X| \ \forall x\in\aff(D-D) \ \exists \delta>0\mbox{ such that }\forall \lambda\in[0,\delta]:u+\lambda x\in D\}.$$
One always has that $\rint D \subseteq \icr D$. The \emph{intrinsic relative algebraic interior} of $D$
(see \cite{Zal-carte, Zal-art-ic}) is defined as
$$^{\ic}{D}=\left\{
\begin{array}{ll}
\icr D, & \mbox {if } \aff D \mbox{ is a closed set},\\
\emptyset, & \mbox{otherwise}.
\end{array}\right.$$
Thus we have, in general, that
\begin{equation}\label{eqriic}
\ri D \subseteq {^{\ic}D}.
\end{equation}

In the case when $D$ is a convex set, the above generalized interiority notions can be characterized as follows:
\begin{enumerate}
\item[$\bullet$] $\core D=\{x\in D:\cone(D-x)=X\}$ (see \cite{Rock-conj-dual, Zal-carte});

\item[$\bullet$]$\icr D=\{x\in D:\cone(D-x)\mbox{ is a linear
subspace of }X\}$ (see \cite{holmes, Zal-carte});

\item[$\bullet$] ${^{\ic} D}=\{x\in D:\cone(D-x)\mbox{ is a closed
linear subspace of }X\}$ (see \cite{Zal-carte,  Zal-art-ic});

\item[$\bullet$] $x\in{^{\ic} D}$ if and only if $x\in\icr D$ and
$\aff(D-x)$ is a closed linear subspace of $X$ (see
\cite{Zal-carte, Zal-art-ic})
\end{enumerate}
and we have the following inclusions
\begin{equation}\label{int-not-1}
\inte D \subseteq \core D \subseteq {^{\ic} D}\subseteq \icr D \subseteq D,
\end{equation}
they being in general strict.

When $Y$ is another real separated locally convex space and $A:X\To Y$ a linear continuous mapping we consider the following notation $\Delta_X^A:=\{(x,Ax):x\in X\}$,
which becomes when $A=\id_X : X \To X$ with $\id_X(x)=x$ for all $x\in X$ (the \emph{identity mapping} on $X$) the \textit{diagonal subspace}
$\Delta_X :=\{(x,x): x \in X\} $ of $X \times X$. The following result, which is of interest independently of the purposes of this article,  will play an important role in the sequel.

\begin{lem}\label{l2.1} Let $X$ and $Y$ be separated locally convex spaces, $U\subseteq X$ and $V\subseteq Y$ two given convex sets
and $A:X \To Y$ a linear continuous mapping.
Then it holds
$$(0,0)\in {^{\ic}\left(U\times V-\Delta_X^A\right)}\Leftrightarrow 0\in {^{\ic}(V-A(U))}.$$
\end{lem}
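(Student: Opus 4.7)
The plan is to reduce the two-sided statement to a product situation via a topological linear automorphism of $X\times Y$ that ``straightens out'' the graph $\Delta_X^A$. The automorphism I would use is
$$R:X\times Y\To X\times Y,\qquad R(x,y):=(x,y-Ax),$$
which is linear, continuous, bijective, with continuous inverse $R^{-1}(x,y)=(x,y+Ax)$, and fixes $(0,0)$. In particular, $R$ preserves sums, scalar multiples, affine hulls, closures, cones, and convex combinations, hence also the intrinsic relative algebraic interior of any convex set.

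The first concrete step is the identity
$$R\bigl(U\times V-\Delta_X^A\bigr)=X\times\bigl(V-A(U)\bigr),$$
obtained by a direct computation: $R(u,v)=(u,v-Au)$ and $R(\Delta_X^A)=X\times\{0\}$, so the image is $\{(u-x,v-Au):u\in U,\ v\in V,\ x\in X\}$, and since $x$ ranges freely over $X$ independently of $v-Au$, this equals $X\times(V-A(U))$. Applying the invariance of $^{\ic}$ under $R$ (this is the only place where a one-line verification is needed, coming from the characterization $c\in{^{\ic}C}\Leftrightarrow\cone(C-c)$ is a closed linear subspace, which is preserved by any topological linear isomorphism), I obtain
$$(0,0)\in{^{\ic}(U\times V-\Delta_X^A)}\Longleftrightarrow (0,0)\in{^{\ic}\bigl(X\times(V-A(U))\bigr)}.$$

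The second step is to show that for any nonempty convex set $E\subseteq Y$ one has $(0,0)\in{^{\ic}(X\times E)}\Leftrightarrow 0\in{^{\ic}E}$. Taking $E:=V-A(U)$ then finishes the proof. For this auxiliary equivalence I would compute
$$\cone\bigl(X\times E-(0,0)\bigr)=\bigcup_{\lambda>0}\lambda X\times\lambda E=X\times\cone E,$$
which is a linear subspace of $X\times Y$ exactly when $\cone E$ is a linear subspace of $Y$; similarly $\aff(X\times E)=X\times\aff E$ is closed in $X\times Y$ iff $\aff E$ is closed in $Y$ (the nontrivial ``$\Rightarrow$'' uses that $\aff E=\pi_Y\bigl((X\times\aff E)\cap(\{0\}\times Y)\bigr)$ together with the fact that $\{0\}\times Y$ is closed). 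Combining these two observations with the conic characterization of $^{\ic}$ yields the desired equivalence.

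The main obstacle is really just finding the substitution $R$; once one sees that the graph $\Delta_X^A$ can be mapped onto $X\times\{0\}$ by a topological linear automorphism, the problem collapses to the product case $X\times E$, which is standard. A direct attack—trying to establish that $\cone(U\times V-\Delta_X^A)$ is a closed linear subspace of $X\times Y$ iff $\cone(V-A(U))$ is a closed linear subspace of $Y$—is possible but requires handling the sum $\Delta_X^A+(\{0\}\times\cone(V-A(U)))$ and its closedness manually, which is precisely what the change of variables $R$ avoids.
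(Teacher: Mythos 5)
Your proof is correct, but it follows a genuinely different route from the paper's. The paper argues directly on the conic characterization: it shows by element-chasing (including net arguments for the closedness parts) that $C:=\cone\left(U\times V-\Delta_X^A\right)$ is a closed linear subspace of $X\times Y$ if and only if $D:=\cone(V-A(U))$ is a closed linear subspace of $Y$, handling the inclusions $-C\subseteq C$, $-D\subseteq D$ and the two closure statements separately. You instead conjugate by the shear $R(x,y)=(x,y-Ax)$, which is a topological linear automorphism of $X\times Y$ mapping $\Delta_X^A$ onto $X\times\{0\}$, verify the identity $R\left(U\times V-\Delta_X^A\right)=X\times\left(V-A(U)\right)$ (the computation $R(u-x,v-Ax)=(u-x,v-Au)$ with $u-x$ ranging freely over $X$ is exactly right), and then invoke two standard facts: invariance of ${^{\ic}}$ under topological linear isomorphisms and the product rule $(0,0)\in{^{\ic}\left(X\times E\right)}\Leftrightarrow 0\in{^{\ic}E}$, both of which you correctly reduce to the characterization of ${^{\ic}}$ via $\cone$ being a closed linear subspace (note that this condition automatically forces $0$ to belong to the set, so no separate membership check is needed). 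Your change of variables buys a cleaner argument: all the closedness transfers are absorbed into the single observation that $R$ is a homeomorphism, so no nets appear, and the same scheme immediately handles variants of the lemma; the paper's direct approach is more self-contained in that it only uses the conic characterization and first principles, at the cost of four separate hands-on verifications. The auxiliary remarks in your plan about $\aff(X\times E)=X\times\aff E$ are redundant once you use the conic characterization, but they do no harm.
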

\begin{proof} In the proof we use the following two characterizations:
$$(0,0)\in {^{\ic}\left(U\times V-\Delta_X^A\right)} \Leftrightarrow C:=\cone\left(U\times V-\Delta_X^A\right) \  \mbox{is a closed linear subspace of} \ X \times Y$$
and
$$0\in {^{\ic}(V-A(U))} \Leftrightarrow D:=\cone(V-A(U)) \ \mbox{is a closed linear subspace of} \ Y.$$

$"\Rightarrow"$ Suppose that $C$ is a closed linear subspace. Since $U$ and $V$ are convex sets, one has that $D$ is a convex cone. In order to proof that $D$ is a linear subspace, we show that $-D \subseteq D$. Take an arbitrary $d\in D$. Thus $d=\a(v-Au)$ for $\a>0,\, u\in U$ and $v\in V,$ hence $(0,d)=(\a(u-u),\a(v-Au))=\a((u,v)-(u,Au)) \in C.$ But $C$ is a linear space, hence $(0,-d)\in C,$ that is $(0,-d)=\b((u_1,v_1)-(x,Ax)),$ with $\b>0,\, u_1\in U,\, v_1\in V$ and $x\in X.$ It results that $u_1-x=0,$ hence $x=u_1 \in U$. Thus $-d=\b(v_1-Au_1)$ with $\b>0,\, u_1\in U,\,v_1\in V,$ hence $-d\in D.$

We prove next that $D$ is closed and consider therefore an arbitrary element $d \in \cl D.$ Thus there exist $(\l_\a)_{\a\in\mathfrak{I}}\subseteq \Real_+,$ $(u_\a)_{\a\in\mathfrak{I}}\subseteq U$ and $(v_\a)_{\a\in\mathfrak{I}}\subseteq V$ such that $d_\a=\l_\a(v_\a-Au_\a)\To d.$ But $(0,d_\a)\in C$ for all $\a\in\mathfrak{I}$ and $C$ is closed, thus $(0,d)=\b(u-x,v-Ax),$ with $\b>0,\, u\in U,\, v\in V$ and $x\in X.$ Hence, $x=u \in U$ and, consequently, $d=\b(v-Au)\in D.$

$"\Leftarrow"$ Suppose now that $D$ is a closed linear subspace. The convexity of the sets $U$ and $V$ guarantees that $C$ is a convex cone. Next we prove that $-C \subseteq C$ and consider to this end an arbitrary $c \in C$. Thus $c=\a(u-x,v-Ax),$ with $\a>0,\, u\in U,\, v\in V,\, x\in X.$ Hence, $c=\a(0,v-Au)+\a(u-x,A(u-x)).$ Obviously, $\a(v-Au)\in D$ and since $D$ is a linear space, we have $-\a(v-Au)=\b(v_1-Au_1),$ with $\b>0, u_1\in U$ and $v_1\in V.$ Thus $-c=\b(0,v_1-Au_1)-\a(u-x,A(u-x))=\b \Big(u_1-\big(u_1+\a/\b(u-x)\big), v_1-A\big(u_1+\a/\beta(u-x)\big)\Big) \in C.$

In order to show that $C$ is closed we consider an element $c:=(c_1,c_2) \in \cl C$ and show that $c\in C.$  Thus
there exist $(\l_\a)_{\a\in\mathfrak{I}}\subseteq \Real_+,$ $(u_\a)_{\a\in\mathfrak{I}}\subseteq U,$ $(v_\a)_{\a\in\mathfrak{I}}\subseteq V$ and $(x_\a)_{\a\in\mathfrak{I}}\subseteq X$ such that $c_\a=\l_\a(u_\a-x_\a,v_\a-Ax_\a)\To c=(c_1,c_2).$ Obviously, $\l_\a(u_\a-x_\a)\To c_1,$ hence $\l_\a A(u_\a-x_\a)\To Ac_1$ and from here we obtain that $\l_{\a}(v_\a-Au_\a)\To c_2-Ac_1.$ But $\l_{\a}(v_\a-Au_\a)\in D$ for all $\a\in\mathfrak{I}$ and $D$ is closed, hence $c_2-Ac_1=\b(v-Au),$ with $\b>0,\, u\in U$ and $v\in V.$ Thus $(c_1,c_2)= \Big(u-\big(u-1/\b c_1\big),v-A\big(u-1/\b c_1\big)\Big) \in C$ and this concludes the proof.
\end{proof}

The \emph{indicator function} of a set $D \subseteq X$ is defined as $\delta_D:X\To\oReal:=\Real \cup \{\pm \infty\}$,
$$\delta_D(x)=\left\{\begin{array}{ll}
0, & \mbox {if } x\in D,\\
+\infty, & \mbox{otherwise}.
\end{array}\right.$$

For $E$ and $F$ two nonempty sets we consider
the \emph{projection operator} $\pr_E:E\times F\rightarrow E$,
$\pr_E(e,f)=e$ for all $(e,f)\in E\times F$. For $G$ and $H$ two further nonempty sets and
$k:E \rightarrow G$ and $l:F \rightarrow H$ two given functions we denote by $k \times l : E \times F \rightarrow G \times H$
the function defined as $k \times l(e,f) = (k(e),l(f))$ for all $(e,f) \in E \times F$. Throughout the paper, when an infimum is attained we write min instead of
inf.

Having a  function $f:X\To \oReal$ we denote its \textit{domain} by $\dom f=\{x\in X:f(x)<+\infty\}$ and its \textit{epigraph} by $\epi f=\{(x,r)\in X\times\Real:f(x)\leq r\}$. We call $f$ \emph{proper} if $\dom f\neq\emptyset$ and $f(x)>-\infty$ for all $x\in X$. By $\cl f : X \rightarrow \oReal$ we denote the \emph{lower semicontinuous hull} of $f$, namely the function whose epigraph is the closure of $\epi f$, that is $\epi(\cl f)=\cl(\epi f).$ We consider also $\co f : X \rightarrow \oReal$, the \emph{convex hull} of $f$, which is the greatest convex function majorized by $f$. For $x\in X$ such that
$f(x)\in\Real$ we define the \emph{subdifferential} of
$f$ at $x$ by $$\partial f(x)=\{x^*\in X^*:f(y)-f(x)\geq\<x^*,y-x\>\ \forall y\in X\}.$$
When $f(x) \in \{\pm\infty\}$ we take by convention $\partial f(x)=\emptyset$.

The \emph{Fenchel-Moreau conjugate} of $f$ is the function $f^*:X^*\To\oReal$ defined by
$$f^*(x^*)=\sup\limits_{x\in X}\{\<x^*,x\>-f(x)\} \ \forall x^*\in X^*.$$
One always has the \emph{Young-Fenchel inequality}
$$f^*(x^*)+f(x)\geq\< x^*,x\> \ \forall x\in X \ \forall x^*\in X^*.$$

Consider $Y$ another separated locally convex space and a mapping $h:X \To Y$. We denote by $h(D) =\{h(x):x\in D\}$
the \emph{image} of a set $D \subseteq X$ through $h$ and by $h^{-1}(E)=\{x\in X:h(x)\in E\}$ the \textit{inverse} of a set $E \subseteq Y$ through $h$.

For $A:X\To Y$ a linear continuous mapping, $\im A:= A(X)$ denotes the \textit{image space} of $A$, while its \emph{adjoint operator} $A^*:Y^*\To X^*$ is defined by $\<A^*y^*,x\>=\< y^*,Ax\>$ for all $y^*\in Y^*$ and $x\in X$. When $X$ and $Y$ are normed spaces, the \textit{biadjoint operator} of $A$, $A^{**}:X^{**}\To Y^{**}$, is  defined as being the adjoint operator of $A^*$.

\subsection{Monotone operators in general Banach spaces}

Consider further $X$ a nonzero real Banach space, $X^*$ its topological dual space and $X^{**}$ its topological bidual space. Throughout the paper we identify $X$ with its image under the canonical injection of $X$ into $X^{**}$. A multivalued operator $S:X\rightrightarrows X^*$ is said to be \emph{monotone} if
$$\< y^*-x^*,y-x\>\geq 0,\mbox{ whenever } y^*\in S(y)\mbox{ and }x^*\in S(x).$$
A monotone operator $S$ is called \emph{maximal monotone} if its graph $G(S)$ is not
properly contained in the graph of any other monotone operator $S':X\rightrightarrows X^*$. For the operator $S$ we consider also its \emph{domain} $D(S):=\{x\in X:S(x)\neq\emptyset\}=\pr_X(G(S))$ and its \emph{range} $R(S):=\cup_{x\in X}S(x)=\pr_{X^*}(G(S))$. The most prominent example of a maximal monotone operator is the subdifferential of a proper, convex and lower semicontinuous function (see \cite{Roc1}). However, there exist maximal monotone operators which are not subdifferentials (see \cite{sim1, Sim}).

To an arbitrary monotone operator $S:X\rightrightarrows X^*$ we associate the \emph{Fitzpatrick function} $\varphi_S:X\times X^*\To \oReal$, defined by
$$\varphi_S(x,x^*)=\sup\{\< y^*,x\> +\<x^*,y\>-\< y^*,y\>:y^*\in S(y)\},$$ which is obviously convex and weak$\times$weak$^*$ lower semicontinuous. Introduced by Fitzpatrick in 1988 (see \cite{fitz}) and rediscovered after some years in \cite{bu-sv-02, legaz-thera2}, it proved to be very important in the theory of maximal monotone operators, revealing important connections between convex analysis and monotone operators (see \cite{Bausch, Borwein, Bot, Bo-Cse, Bo-Gra-Wa, BGW-max-suma, bu-fitz, bu-sv-03, bu-sv-02, legaz-sv, penot, Pen-Zal, Sim, simons-zal, zalproc, voisei, penot-is-conv, penot-repr} and the references therein).

Denoting by $c:X\times X^*\rightarrow\Real$, $c(x,x^*)=\<x^*,x\>$ for all $(x,x^*)\in X\times X^*$ the \textit{coupling function} of $X \times X^*$, one can easily show that $\varphi_S(x,x^*)=c_S^*(x^*,x)$ for all $(x,x^*)\in X\times X^*$, where $c_S : X\times X^*\rightarrow\oReal$, $c_S=c+\delta_{G(S)}$. Well-linked to the Fitzpatrick function is the function $\psi_S : X\times X^*\rightarrow\oReal$, $\psi_S=\cl_{\|\cdot\|\times\|\cdot\|_*}(\co c_S)$, where the closure is taken in the strong topology of $X\times X^*$. For all $(x,x^*) \in X \times X^*$ we have $\psi_S^*(x^*,x) = \varphi_S(x,x^*)$, while when $X$ is a reflexive Banach space the equality $\varphi_S^*(x^*,x)=\psi_S(x,x^*)$ holds (see \cite[Remark 5.4]{bu-sv-02}). The most important properties of the Fitzpatrick function of a maximal monotone operator follow.
\begin{lem} \label{fitz}(see \cite{fitz}) Let $S:X\rightrightarrows X^*$ be a maximal monotone operator. Then
\begin{enumerate}
\item[(i)]$\varphi_S(x,x^*)\geq\< x^*,x\>$ for all $(x,x^*)\in X\times X^*$,
\item[(ii)] $G(S)=\{(x,x^*)\in X\times X^*:\varphi_S(x,x^*)=\< x^*,x\>\}$.
\end{enumerate}
\end{lem}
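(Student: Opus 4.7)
The plan is to treat (i) and (ii) together by splitting into the cases $(x,x^*)\in G(S)$ and $(x,x^*)\notin G(S)$, using monotonicity to handle the first case and maximality to handle the second. Observe first that for any $(y,y^*)\in G(S)$ we may rewrite
\[
\< y^*,x\> +\< x^*,y\>-\< y^*,y\> =\< x^*,x\>-\< y^*-x^*,y-x\>,
\]
so the supremum defining $\varphi_S(x,x^*)$ can be reorganized as
\[
\varphi_S(x,x^*)=\< x^*,x\>+\sup\bigl\{-\< y^*-x^*,y-x\>:(y,y^*)\in G(S)\bigr\}.
\]
This identity reduces the whole statement to analyzing the sign and vanishing of the remaining supremum, and it is where the monotonicity hypotheses will be plugged in directly.

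For the first case, suppose $(x,x^*)\in G(S)$. Then monotonicity of $S$ gives $\< y^*-x^*,y-x\>\ge 0$ for every $(y,y^*)\in G(S)$, so the supremum above is $\le 0$, yielding $\varphi_S(x,x^*)\le \< x^*,x\>$. Choosing $(y,y^*)=(x,x^*)$ as an admissible element in the defining supremum gives the reverse inequality $\varphi_S(x,x^*)\ge \< x^*,x\>$. Hence $\varphi_S(x,x^*)=\< x^*,x\>$, which covers (i) in this case and gives the inclusion $G(S)\subseteq\{\varphi_S=c\}$ in (ii).

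For the second case, suppose $(x,x^*)\notin G(S)$. Here I would invoke the maximality of $S$ in the contrapositive formulation: since $(x,x^*)$ cannot be adjoined to $G(S)$ without destroying monotonicity, there must exist a witness $(y_0,y_0^*)\in G(S)$ with $\<y_0^*-x^*,y_0-x\><0$. Plugging this single element into the reorganized expression for $\varphi_S$ already produces the strict inequality $\varphi_S(x,x^*)>\< x^*,x\>$. This simultaneously finishes (i) and establishes the reverse inclusion $\{\varphi_S=c\}\subseteq G(S)$ needed for (ii).

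The only delicate point is the legitimacy of selecting the strict witness in the second case — one has to apply the contrapositive of the characterization of maximal monotonicity (rather than merely the definition), but this is standard and requires no topological assumption beyond monotonicity and maximality. All remaining steps are algebraic manipulations of the coupling $\langle\cdot,\cdot\rangle$, so no difficulty is expected from them.
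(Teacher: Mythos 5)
Your argument is correct and, since the paper gives no proof of this lemma (it is simply quoted from \cite{fitz}), there is nothing to deviate from: your two-case analysis is exactly the standard argument behind the cited result. On $G(S)$ monotonicity bounds the reorganized supremum by $0$ while the choice $(y,y^*)=(x,x^*)$ gives the reverse inequality, and off $G(S)$ the maximality of $S$ (the graph $G(S)\cup\{(x,x^*)\}$ cannot be monotone, and the violation must involve $(x,x^*)$ against some $(y_0,y_0^*)\in G(S)$) supplies the witness with $\langle y_0^*-x^*,y_0-x\rangle<0$, giving the strict inequality $\varphi_S(x,x^*)>\langle x^*,x\rangle$; together these yield both (i) and (ii) with no gap.
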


They gave rise to the following notion introduced in connection to a monotone operator.

\begin{defn}\label{repr} For $S:X\rightrightarrows X^*$ a monotone operator, we call \emph{representative function} of $S$ a convex
and lower semicontinuous (in the strong topology of $X\times X^*$) function $h_S:X\times X^*\To\oReal$ fulfilling
\begin{equation*}
h_S\geq c\mbox{ and }G(S)\subseteq\{(x,x^*)\in
X\times X^*:h_S(x,x^*)=\< x^*,x\>\}.
\end{equation*}
\end{defn}
If $G(S)\neq\emptyset$ (which is the case when $S$ is maximal monotone), then every representative function of $S$ is proper. Obviously, the Fitzpatrick function associated to a maximal monotone operator is a representative function of the operator. From \cite{bu-sv-02} we have the following properties for the representative function of a maximal monotone operator.

\begin{prop} \label{frepr} Let $S:X\rightrightarrows X^*$ be a maximal monotone operator and $h_S$ be a representative function of $S$.
Then the following statements are true:
\begin{enumerate}
\item[(i)]$\varphi_S\leq h_S\leq \psi_S$;
\item[(ii)]the function $(x,x^*) \mapsto h_S^*(x^*,x)$ is also a representative function of $S$;
\item[(iii)] $\{(x,x^*)\in X\times X^*:h_S(x,x^*)=\<x^*,x\>\}=\{(x,x^*)\in X\times X^*:h_S^{*}(x^*,x)=\<x^*,x\>\}=G(S)$.
\end{enumerate}
\end{prop}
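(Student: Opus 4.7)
The plan is to prove (i) first and then deduce (ii) and (iii) from it. For the upper bound in (i), the defining conditions of a representative function force $h_S\le c_S$: on $G(S)$ the requirement $h_S\ge c$ combined with the equality $h_S=c$ there give $h_S=c_S$, while off $G(S)$ the inequality is trivial since $c_S=+\infty$. Because $\psi_S=\cl_{\|\cdot\|\times\|\cdot\|_*}(\co c_S)$ is by construction the largest strongly lower semicontinuous convex minorant of $c_S$, the strongly lsc convex function $h_S$ must satisfy $h_S\le\psi_S$. The substantive step is $\varphi_S\le h_S$, which I would obtain by a convex-combination argument. For $(y,y^*)\in G(S)$, arbitrary $(x,x^*)\in X\times X^*$ and $t\in(0,1)$, applying convexity of $h_S$ at $t(x,x^*)+(1-t)(y,y^*)$, minorizing the left-hand side by $h_S\ge c$ and using $h_S(y,y^*)=\<y^*,y\>$ on the right yields a quadratic inequality in $t$ which, after dividing by $t$ and letting $t\downarrow 0$, collapses to
\[
h_S(x,x^*)\ge\<y^*,x\>+\<x^*,y\>-\<y^*,y\>\quad\forall\,(y,y^*)\in G(S);
\]
taking the supremum over $G(S)$ delivers $h_S\ge\varphi_S$.

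For (ii), set $\tilde h_S(x,x^*):=h_S^*(x^*,x)$. Convexity is inherited from the conjugate, while strong lower semicontinuity is automatic since $h_S^*$ is a supremum of strongly continuous affine functionals on $X^*\times X^{**}$ and hence strongly lsc on the subspace $X^*\times X$. The bound $\tilde h_S\ge c$ follows from (i): conjugating $h_S\le\psi_S$ and using $\psi_S^*(x^*,x)=\varphi_S(x,x^*)\ge\<x^*,x\>$ (the latter by Lemma~\ref{fitz}(i)) yields $\tilde h_S(x,x^*)\ge\<x^*,x\>$. For the equality on $G(S)$, the Fenchel--Young inequality $h_S^*(y^*,y)+h_S(y,y^*)\ge 2\<y^*,y\>$ combined with $h_S(y,y^*)=\<y^*,y\>$ yields $h_S^*(y^*,y)\ge\<y^*,y\>$; the reverse is essentially free, since the key inequality produced in (i), rearranged as $\<y^*,x\>+\<x^*,y\>-h_S(x,x^*)\le\<y^*,y\>$ for all $(x,x^*)$, is precisely the supremum defining $h_S^*(y^*,y)$.

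Part (iii) is where the maximal monotonicity of $S$ finally enters, through Lemma~\ref{fitz}(ii). The inclusions $G(S)\subseteq\{(x,x^*):h_S(x,x^*)=\<x^*,x\>\}$ and $G(S)\subseteq\{(x,x^*):h_S^*(x^*,x)=\<x^*,x\>\}$ are provided by Definition~\ref{repr} and by (ii). For the converse, if $h_S(x,x^*)=\<x^*,x\>$ then the chain
\[
\<x^*,x\>=h_S(x,x^*)\ge\varphi_S(x,x^*)\ge\<x^*,x\>,
\]
obtained from (i) and Lemma~\ref{fitz}(i), forces $\varphi_S(x,x^*)=\<x^*,x\>$, hence $(x,x^*)\in G(S)$ by Lemma~\ref{fitz}(ii); the analogous statement for $h_S^*$ follows by repeating this sandwich with $\tilde h_S$ in place of $h_S$, using that (i) applied to the representative function $\tilde h_S$ gives $\tilde h_S\ge\varphi_S$.

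The real obstacle is the convex-combination argument producing $\varphi_S\le h_S$ in (i); once this inequality is in place, everything else reduces to formal manipulations of conjugates and to the Fitzpatrick characterization of $G(S)$. A minor subtlety worth mentioning is that $h_S^*$ naturally lives on the bidual $X^*\times X^{**}$, so one has to check that restriction to $X^*\times X$ preserves the required lower semicontinuity.
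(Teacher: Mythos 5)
Your proposal is correct: the convex-combination/limit argument gives the affine minorants whose supremum is $\varphi_S$, the observation $h_S\le c_S$ together with the maximality of the lsc convex hull gives $h_S\le\psi_S$, and parts (ii) and (iii) then follow by conjugation and the Fitzpatrick characterization of $G(S)$ (Lemma \ref{fitz}), exactly as you describe. The paper itself offers no proof of this proposition -- it is imported from \cite{bu-sv-02} -- and your argument is essentially the standard one underlying that reference, so there is nothing to reconcile.
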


By Proposition \ref{frepr} it follows that a convex and lower semicontinuous function $f:X\times X^*\To\oReal$ is a representative function
of the maximal monotone operator $S$ if and only if $\varphi_S\leq f\leq \psi_S$, in particular, $\varphi_S$ and $\psi_S$ are representative functions of $S$.
Let us also notice that if $f:X\To \oReal$ is a proper, convex and lower semicontinuous function, then a representative function of
the maximal monotone operator $\partial f:X\rightrightarrows X^*$ is the function $(x,x^*)\mapsto f(x)+f^*(x^*)$. Moreover, according to \cite[Theorem 3.1]{bu-fitz} (see also \cite[Example 3]{penot-repr}), if $f$ is a sublinear and lower semicontinuous function, then the operator $\partial f:X\rightrightarrows X^*$ has a unique representative function, namely the function $(x,x^*)\mapsto f(x)+f^*(x^*)$. For more on the properties of representative functions we refer to \cite{Borwein, bu-sv-02, legaz-sv, Pen-Zal} and the references therein.

Next we give a maximality criteria for a monotone operator valid in reflexive Banach spaces (cf. \cite[Theorem 3.1]{bu-sv-03} and \cite[Proposition 2.1]{Pen-Zal}; see also \cite{sim1} for other maximality criteria in reflexive spaces).

\begin{thm}\label{max-cr}  Let $X$ be a reflexive Banach space and $f:X\times X^*\To \oReal$ a proper, convex and lower semicontinuous function such that $f\geq c$. Then the operator whose graph is the set $\{(x,x^*)\in X\times X^*:f(x,x^*)=\langle x^*,x\rangle \}$ is maximal monotone if and only if $f^*(x^*,x) \geq \langle x^*,x \rangle $ for all $(x,x^*) \in X \times X^*$.
\end{thm}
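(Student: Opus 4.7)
The proof splits into three pieces. First I would observe that $S$ with $G(S)=\{(x,x^*):f(x,x^*)=\<x^*,x\>\}$ is automatically monotone, using only $f\ge c$ and convexity of $f$: for $(x,x^*),(y,y^*)\in G(S)$ and $\l\in(0,1)$, convexity gives $f(\l(x,x^*)+(1-\l)(y,y^*))\le\l\<x^*,x\>+(1-\l)\<y^*,y\>$, while $f\ge c$ evaluated at the same convex combination yields a quadratic (in $\l$) lower bound; comparing the coefficients of $\l(1-\l)$ forces $\<x^*-y^*,x-y\>\ge 0$.

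The necessity direction is then essentially formal. Assuming $S$ is maximal monotone, the function $f$ is (by the hypothesis $f\ge c$ together with the defining equality $G(S)=\{f=c\}$) a representative function of $S$ in the sense of Definition \ref{repr}. Proposition \ref{frepr}(ii) applied with $h_S=f$ gives that $(x,x^*)\mapsto f^*(x^*,x)$ is also a representative function of $S$; since every representative function dominates $c$, we conclude $f^*(x^*,x)\ge\<x^*,x\>$ for all $(x,x^*)\in X\times X^*$.

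The sufficiency direction is where the real work lies and where the reflexivity of $X$ is essential. For a fixed $(x_0,x_0^*)\in X\times X^*$, by the standard maximality criterion it suffices to exhibit $(\bar x,\bar x^*)\in G(S)$ with $\<\bar x^*-x_0^*,\bar x-x_0\>\le 0$. I would minimize the proper, convex, lower semicontinuous function
$$F(x,x^*):=f(x,x^*)+\tfrac12\|x-x_0\|^2+\tfrac12\|x^*-x_0^*\|_*^2-\<x_0^*,x\>-\<x^*,x_0\>+\<x_0^*,x_0\>.$$
Decomposing $F=(f-c)+G$ with $G(x,x^*):=\<x^*-x_0^*,x-x_0\>+\tfrac12\|x-x_0\|^2+\tfrac12\|x^*-x_0^*\|_*^2$, both summands are nonnegative (the first by $f\ge c$, the second by Cauchy-Schwarz combined with AM-GM), so $F\ge 0$. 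I would then apply Fenchel-Rockafellar duality to the splitting $F=f+g$, where $g$ is the continuous, separable quadratic-plus-linear perturbation (whose continuity automatically ensures strong duality). After explicit computation of $g^*$ using the separability in $x$ and $x^*$, strong duality delivers
$$\inf F=\sup_{(u^*,u)\in X^*\times X}\Bigl[-f^*(u^*,u)+\<u^*,x_0\>+\<x_0^*,u\>-\<x_0^*,x_0\>-\tfrac12\|x_0^*-u^*\|_*^2-\tfrac12\|x_0-u\|^2\Bigr].$$
The hypothesis $f^*(u^*,u)\ge\<u^*,u\>$ bounds the integrand above by $-\<u^*-x_0^*,u-x_0\>-\tfrac12\|x_0^*-u^*\|_*^2-\tfrac12\|x_0-u\|^2\le-\tfrac12(\|u^*-x_0^*\|_*-\|u-x_0\|)^2\le 0$, so $\inf F=0$. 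A mild $\e$-coercive regularization $F_\e:=F+\e[\tfrac12\|\cdot-x_0\|^2+\tfrac12\|\cdot-x_0^*\|_*^2]$ admits a minimizer via reflexivity and weak lower semicontinuity; letting $\e\downarrow 0$ and extracting a weakly convergent subsequence yields a minimizer $(\bar x,\bar x^*)$ of $F$ with $F(\bar x,\bar x^*)=0$. Nullity of each summand separately gives $(\bar x,\bar x^*)\in G(S)$ and $\<\bar x^*-x_0^*,\bar x-x_0\>=-\tfrac12\|\bar x-x_0\|^2-\tfrac12\|\bar x^*-x_0^*\|_*^2\le 0$, as required. The main obstacles are the Fenchel-Rockafellar dual computation, the integrand upper bound (which is the single place where the hypothesis is actually invoked, combined with Cauchy-Schwarz on the quadratic penalty), and the existence of the minimizer via the regularization-and-weak-limit trick made possible by reflexivity.
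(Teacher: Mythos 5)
The paper itself offers no proof of this theorem (it is quoted from the cited works of Burachik--Svaiter and Penot--Z\u alinescu), so your argument has to stand on its own. Its architecture is the standard one and most of it checks out: the quadratic-in-$\l$ comparison does show that $\{f=c\}$ is monotone, necessity via Proposition \ref{frepr}(ii) is legitimate, and in the sufficiency part the decomposition $F=(f-c)+G$, the computation of the Fenchel dual of $F=f+g$, and the estimate of the dual objective by $-\<u^*-x_0^*,u-x_0\>-\tfrac12\|x_0^*-u^*\|_*^2-\tfrac12\|x_0-u\|^2\le 0$ (the only place $f^*\ge c$ enters) are all correct, giving $\inf F=0$. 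One formulation, however, is wrong as stated: producing, for \emph{every} $(x_0,x_0^*)$, some $(\bar x,\bar x^*)\in G(S)$ with $\<\bar x^*-x_0^*,\bar x-x_0\>\le 0$ is \emph{not} a maximality criterion (in $\Real\times\Real$ the non-maximal graph $\{(t,0):t\ne 0\}$ has this property). What you must use -- and what your computation actually delivers -- is the identity $\<\bar x^*-x_0^*,\bar x-x_0\>=-\tfrac12\|\bar x-x_0\|^2-\tfrac12\|\bar x^*-x_0^*\|_*^2$: if $(x_0,x_0^*)$ is monotonically related to $G(S)$, the left-hand side is $\ge 0$, which forces $(\bar x,\bar x^*)=(x_0,x_0^*)\in G(S)$. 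So state the criterion as ``every point monotonically related to $G(S)$ lies in $G(S)$'' and finish with this identity.

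The genuine gap is the existence of a minimizer of $F$. Your $\e$-regularization only yields $F(x_\e,x_\e^*)\le\inf F_\e\to 0$ and $\e\bigl[\tfrac12\|x_\e-x_0\|^2+\tfrac12\|x_\e^*-x_0^*\|_*^2\bigr]\le C$, which does not bound the net $(x_\e,x_\e^*)$; nothing you have invoked rules out that the regularized minimizers escape to infinity, and for a general convex lower semicontinuous $F\ge 0$ they may do so (already $F(t)=e^{-t}$ on $\Real$: the minimizers of $F(t)+\e t^2/2$ tend to $+\infty$ and $F$ has no minimizer). So reflexivity plus $F\ge 0$ alone cannot close this step. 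The missing ingredient is that this particular $F$ is coercive: since $f$ is proper, convex and lower semicontinuous it admits a continuous affine minorant, and an affine function plus the quadratic terms $\tfrac12\|x-x_0\|^2+\tfrac12\|x^*-x_0^*\|_*^2$ dominates the remaining linear terms, so $F(x,x^*)\to+\infty$ as $\|x\|+\|x^*\|_*\to\infty$. Coercivity, convexity, lower semicontinuity and reflexivity then give a minimizer of $F$ directly (bounded, weakly compact sublevel sets and weak lower semicontinuity), and the regularization can be discarded altogether. With these two repairs -- the correctly stated maximality criterion and the coercivity argument for primal attainment -- your proof is complete and coincides with the classical argument behind the cited result.
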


For the following  generalization of this result to general Banach spaces we refer to \cite[Theorem 4.2]{mas}.

\begin{thm}\label{max-cr-nref} Let $X$ be a nonzero Banach space and $f:X\times X^*\To \oReal$ a proper, convex and lower semicontinuous function such that $f\geq c$ and $f^*(x^*,x^{**}) \geq \< x^{**}, x^* \>$ for all $(x^*,x^{**}) \in X^* \times X^{**}$. Then the operator whose graph is the set $\{(x,x^*)\in X\times X^*:f(x,x^*)=\langle x^*,x\rangle \}$ is maximal monotone and it holds $\{(x,x^*)\in X\times X^*:f(x,x^*)=\langle x^*,x\rangle \} = \{(x,x^*)\in X\times X^*:f^*(x^*,x)=\langle x^*,x\rangle \}$.
\end{thm}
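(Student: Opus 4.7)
Denote $S := \{(x,x^*) \in X \times X^* : f(x,x^*) = \<x^*,x\>\}$. The plan is to prove monotonicity of $S$, then maximality, and finally the symmetric description via $f^*$.

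\textbf{Monotonicity.} For $(x,x^*),(y,y^*) \in S$, convexity of $f$ gives
\[
f\bigl(\tfrac{x+y}{2},\tfrac{x^*+y^*}{2}\bigr) \leq \tfrac{1}{2}\<x^*,x\> + \tfrac{1}{2}\<y^*,y\>,
\]
while $f \geq c$ evaluated at the same midpoint furnishes the matching lower bound; expanding the resulting inequality yields $\<x^*-y^*, x-y\> \geq 0$. This step uses only convexity of $f$ and the bound $f \geq c$, not the hypothesis on $f^*$.

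\textbf{Maximality.} First reduce to the origin by translation. Given a candidate $(x_0,x_0^*) \in X \times X^*$ monotonically related to $S$, set
\[
\widetilde f(x,x^*) := f(x+x_0, x^*+x_0^*) - \<x_0^*,x\> - \<x^*,x_0\> - \<x_0^*,x_0\>.
\]
A direct conjugate computation shows that $\widetilde f$ is proper, convex, lower semicontinuous, satisfies $\widetilde f \geq c$ on $X \times X^*$ and $\widetilde f^*(u^*,u^{**}) \geq \<u^{**},u^*\>$ on $X^* \times X^{**}$, and that $S_{\widetilde f} = S - (x_0,x_0^*)$. Thus one may assume $(x_0,x_0^*)=(0,0)$ and $\<x^*,x\> \geq 0$ for every $(x,x^*) \in S$, and the task becomes $f(0,0) = 0$. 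Supposing towards a contradiction $f(0,0) > 0$, I would apply the monotonicity argument from the previous paragraph to the convex lower semicontinuous function $(x^{**},x^*) \mapsto f^*(x^*,x^{**})$ on $X^{**} \times X^*$ in order to obtain that $\widehat S := \{(x^{**},x^*) : f^*(x^*,x^{**}) = \<x^{**},x^*\>\}$ is a monotone subset of $X^{**} \times X^*$. Then I would combine Goldstine's weak$^*$-density of the unit ball of $X$ in that of $X^{**}$ with the Br{\o}nsted--Rockafellar theorem applied to the proper convex lower semicontinuous function $(x,x^*) \mapsto f(x,x^*) - \<x^*,x\> \geq 0$ to produce a sequence $(x_n,x_n^*) \in S$ with $\<x_n^*,x_n\> < 0$ for $n$ large, contradicting the monotone relation.

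\textbf{Set equality.} Once $S$ has been shown to be maximal monotone, $f$ automatically qualifies as a representative function of $S$, so Proposition~\ref{frepr}(iii) applied in this setting immediately delivers
\[
\{(x,x^*) \in X \times X^* : f(x,x^*) = \<x^*,x\>\} = \{(x,x^*) \in X \times X^* : f^*(x^*,x) = \<x^*,x\>\}.
\]
The main obstacle is the last portion of the maximality step: bridging the hypothesis $f^*(u^*,u^{**}) \geq \<u^{**},u^*\>$, which lives on the bidual pairing, back to a concrete element of $S \subseteq X \times X^*$ witnessing non-maximality. In reflexive spaces this is immediate via Minty's or Rockafellar's surjectivity theorems, but the non-reflexive setting forces the use of weak$^*$-density (Goldstine) together with a Br{\o}nsted--Rockafellar approximation of approximate subgradients.
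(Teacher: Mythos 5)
Your monotonicity argument, the translation reduction (the computation showing that $\widetilde f\geq c$ and $\widetilde f^{\,*}(u^*,u^{**})\geq\langle u^{**},u^*\rangle$ survive the shift is indeed correct), and the concluding set equality via Proposition~\ref{frepr}(iii) are all sound. For calibration: the paper does not prove this statement at all, it simply cites \cite[Theorem 4.2]{mas}, so what you are attempting is a reproof of the Marques Alves--Svaiter theorem, and the whole substance of that theorem is the maximality step in the non-reflexive setting.

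That step is exactly where your proposal has a genuine gap: it is a plan (``I would apply\dots I would combine\dots'') rather than an argument, and its one concrete technical claim is false. The function $(x,x^*)\mapsto f(x,x^*)-\langle x^*,x\rangle$ is \emph{not} convex, because the coupling $c$ is bilinear rather than affine; it is only a difference of convex functions, so the classical Br{\o}nsted--Rockafellar theorem cannot be applied to it as you propose. What is actually needed is a Br{\o}nsted--Rockafellar--\emph{type} property for the representable function $f$ itself (roughly: if $f(x,x^*)\leq\langle x^*,x\rangle+\varepsilon$, then there is $(\bar x,\bar x^*)$ with $f(\bar x,\bar x^*)=\langle\bar x^*,\bar x\rangle$ and $\|\bar x-x\|,\|\bar x^*-x^*\|\leq\sqrt{\varepsilon}$), and this is itself a nontrivial theorem of \cite{mas} whose proof uses the hypothesis $f^*\geq\langle\cdot,\cdot\rangle$ on $X^*\times X^{**}$ through a duality computation. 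In your sketch that hypothesis is never actually used: you note that it makes $\widehat S\subseteq X^{**}\times X^*$ monotone, but neither $\widehat S$ nor Goldstine's theorem subsequently enters the argument, and no mechanism is given for producing $(x_n,x_n^*)\in S$ with $\langle x_n^*,x_n\rangle<0$. The missing idea is precisely this mechanism: after the reduction to $(0,0)$, one shows via Fenchel duality between $X\times X^*$ and $X^*\times X^{**}$, using $f^*\geq c$ there, that $\inf_{(x,x^*)}\{f(x,x^*)+\tfrac12\|x\|^2+\tfrac12\|x^*\|^2\}=0$, and then (since in a non-reflexive space neither the primal infimum nor the embedding of a dual solution into $X$ is available, which is where an approximation of bidual elements would intervene) one invokes the Br{\o}nsted--Rockafellar--type property for $f$ to land on genuine points of $S$ that violate $\langle x^*,x\rangle\geq 0$ unless $f(0,0)=0$. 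Without supplying this argument (or explicitly quoting it from \cite{mas}), the proof of maximality is not established.
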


In the last part of this section we turn our attention to a particular class of maximal monotone operators on general Banach spaces.

\begin{defn}\label{GossezD}{(see \cite{Gos})} Let $S:X\rightrightarrows X^*$ be a maximal monotone operator.
\begin{enumerate}
\item[(a)] Gossez's monotone closure of $S$ is the operator $\overline{S}:X^{**}\rightrightarrows X^*$ whose graph is
$$G(\overline{S})=\{(x^{**},x^*)\in X^{**}\times X^*:\<x^{**}-y,x^{*}-y^*\>\ge 0 \ \forall(y,y^*)\in G(S)\}.$$
\item[(b)] The operator $S:X\rightrightarrows X^*$ is said to be of Gossez type (D) if for any $(x^{**},x^*)\in G(\overline{S})$ there exists a bounded net $\{(x_\a,x_\a^*)\}_{\a\in \mathfrak{I}}\subseteq G(S)$ which converges to $(x^{**},x^*)$ in the $w^*\times\|\cdot\|_*$-topology of $X^{**}\times X^*.$
\end{enumerate}
\end{defn}
Gossez proved in \cite{Gos2} that a maximal monotone operator $S:X\rightrightarrows X^*$ of Gossez type (D) has a \textit{unique}
maximal monotone extension to the bidual, namely, its \textit{Gossez's monotone closure} $\overline{S}:X^{**}\rightrightarrows X^*$. The following characterization of the maximal monotone operators of Gossez type (D) was recently provided in \cite{mas3} (see also \cite{mas2}).

\begin{thm}\label{th-gos}  Let $X$ be a nonzero real Banach space and $S:X \rightrightarrows X^*$ a maximal monotone operator. The following statements are equivalent:
\begin{enumerate}
\item[(a)] $S$ is of Gossez type (D);
\item[(b)] $S$ is of Simons negative infimum type (NI) (see \cite{Sim2}), namely
$$\inf_{(y,y^*)\in G(S)}\<y-x^{**},y^*-x^{*}\>\le 0 \ \forall(x^*,x^{**})\in X^*\times X^{**};$$
\item[(c)] there exists a representative function $h_S$ of $S$ such that
$$h_S^*(x^*,x^{**}) \geq \< x^{**}, x^* \> \ \forall (x^*,x^{**}) \in X^* \times X^{**};$$
\item[(d)] for every representative function $h_S$ of $S$ one has
$$h_S^*(x^*,x^{**}) \geq \< x^{**}, x^* \> \ \forall (x^*,x^{**}) \in X^* \times X^{**}.$$
\end{enumerate}
\end{thm}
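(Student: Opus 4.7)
The four conditions split naturally into two groups: the Gossez/Simons-type conditions (a), (b), concerning the operator itself, and the conjugate-based conditions (c), (d), concerning representative functions. My plan is to run the cycle (a) $\Rightarrow$ (b) $\Rightarrow$ (d) $\Rightarrow$ (c) $\Rightarrow$ (a), with the final implication carrying the main technical difficulty.

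For (a) $\Rightarrow$ (b), I fix $(x^*, x^{**}) \in X^* \times X^{**}$ and split into cases. If $(x^{**}, x^*) \notin G(\overline{S})$, the definition of the Gossez monotone closure gives some $(y, y^*) \in G(S)$ with $\<y - x^{**}, y^* - x^*\> < 0$, so the infimum in (NI) is already negative. Otherwise $(x^{**}, x^*) \in G(\overline{S})$, and (a) furnishes a bounded net $(y_\a, y_\a^*)_{\a \in \mathfrak{I}} \subseteq G(S)$ converging to $(x^{**}, x^*)$ in $w^* \times \|\cdot\|_*$. Expanding $\<y_\a - x^{**}, y_\a^* - x^*\> = \<y_\a^*, y_\a\> - \<x^*, y_\a\> - \<x^{**}, y_\a^*\> + \<x^{**}, x^*\>$ and combining the weak$^*$ convergence of $(y_\a)_\a$ with the norm convergence of $(y_\a^*)_\a$ and the boundedness of both nets, one obtains $\<y_\a - x^{**}, y_\a^* - x^*\> \to 0$, giving (NI).

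For (b) $\Rightarrow$ (d), the key identity is $\psi_S^*(x^*, x^{**}) = c_S^*(x^*, x^{**})$ on $X^* \times X^{**}$, which follows from the invariance of Fenchel conjugation under passage to the convex lower semicontinuous hull. Thus (b) reads $\psi_S^*(x^*, x^{**}) \geq \<x^{**}, x^*\>$; for any representative function $h_S$, Proposition \ref{frepr}(i) gives $h_S \leq \psi_S$, hence $h_S^*(x^*, x^{**}) \geq \psi_S^*(x^*, x^{**}) \geq \<x^{**}, x^*\>$, which is (d). The implication (d) $\Rightarrow$ (c) is trivial.

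The hard step is (c) $\Rightarrow$ (a). The first move is to apply Theorem \ref{max-cr-nref} to $h_S$: representativity gives $h_S \geq c$, and (c) gives $h_S^*(x^*, x^{**}) \geq \<x^{**}, x^*\>$, so $\{h_S = c\}$ is the graph of a maximal monotone operator which, combined with Proposition \ref{frepr}(iii) and the maximality of $S$, coincides with $G(S)$, and moreover equals $\{(x, x^*) \in X \times X^* : h_S^*(x^*, x) = \<x^*, x\>\}$. Given $(x^{**}, x^*) \in G(\overline{S})$, one must then produce a bounded net in $G(S)$ approximating $(x^{**}, x^*)$ in $w^* \times \|\cdot\|_*$. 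The obstacle here is that (c) only provides $h_S^* \geq c_S^*$ on the bidual (since $h_S \leq \psi_S$ reverses under conjugation), whereas it is the tighter $c_S^*(x^*, x^{**}) \geq \<x^{**}, x^*\>$ that directly encodes (NI). Passage from the weaker inequality to the approximating net requires perturbing $h_S^*$ in its bidual argument and invoking a Goldstine-type density argument for $X$ in $X^{**}$, as carried out in Marques Alves and Svaiter \cite{mas3, mas2}; once the net is constructed, (a) holds by definition.
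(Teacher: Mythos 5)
Your implications (a)$\Rightarrow$(b), (b)$\Rightarrow$(d) and (d)$\Rightarrow$(c) are correct: the case split on whether $(x^{**},x^*)\in G(\overline S)$ together with the boundedness of the net handles (a)$\Rightarrow$(b), and the identity $\psi_S^*=c_S^*$ on $X^*\times X^{**}$ (conjugation is blind to taking the closed convex hull) combined with $h_S\le\psi_S$ from Proposition \ref{frepr}(i) correctly yields (b)$\Rightarrow$(d). Note, however, that the paper itself offers no proof of this theorem: it is quoted verbatim from \cite{mas3} (see also \cite{mas2}), so there is no internal argument to compare yours against.

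The genuine gap is (c)$\Rightarrow$(a), and it is not a small one: it is the entire substance of the theorem. You diagnose the difficulty correctly --- since $c_S^*=\psi_S^*\le h_S^*$, the hypothesis $h_S^*\ge\langle\cdot,\cdot\rangle$ for \emph{one} representative function does not yield $c_S^*\ge\langle\cdot,\cdot\rangle$ (i.e.\ (b)), and even granting (b) one must still manufacture, for each $(x^{**},x^*)\in G(\overline S)$, a \emph{bounded} net in $G(S)$ converging in the $w^*\times\|\cdot\|_*$ sense --- but you then resolve it by deferring to ``a Goldstine-type density argument \ldots as carried out in Marques Alves and Svaiter''. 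That is a citation of the very result being proved, not a proof: the argument in \cite{mas3, mas2} does not reduce to perturbing $h_S^*$ and invoking Goldstine; it rests on a Brøndsted--Rockafellar-type property for convex representations of monotone operators in non-reflexive spaces (the subject of \cite{mas}), which is what actually produces the approximating bounded net and closes the loop from the conjugate inequality back to type (D). Your preliminary application of Theorem \ref{max-cr-nref} only recovers the maximal monotonicity of $S$, which is already assumed, and contributes nothing toward the net construction. As it stands, the proposal proves the easy cycle and outsources the hard implication to the reference the paper cites for the whole theorem; to be a proof it would have to reproduce (or at least accurately outline) the Brøndsted--Rockafellar-based argument of \cite{mas3}.
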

A representative function $h_S$ of a maximal monotone operator $S:X \rightrightarrows X^*$ fulfilling the inequality in the item (c) (or (d)) of the above theorem is called \textit{strong representative} function of $S$ (see \cite{voiseizal}). The Fitzpatrick function $\varphi_S$ of a maximal monotone operator $S:X \rightrightarrows X^*$ of Gossez type (D) is a strong representative function and one has $\varphi_{\overline S}|_{X \times X^*} = \varphi_S$. When $h_S:X \times X^* \rightarrow \oReal$ is a representative function of a maximal monotone operator of Gossez type (D) $S:X \rightrightarrows X^*$, then $h_S^* : X^* \times X^{**} \rightarrow \oReal$ is a representative function of the inverse operator $\overline S^{-1} : X^* \rightarrow X^{**}$ of Gossez's monotone closure $\overline{S}$ of $S$ (for these statements we refer the reader to \cite{mas3}).

\section{A generalized bivariate infimal convolution formula}

In this section we provide, by making use of an appropriate \textit{conjugate formula}, sufficient conditions for an \textit{extended bivariate infimal convolution formula}, which we use in the sequel.

\subsection{An useful conjugate formula}

Let $X,Y,Z$ be real separated locally convex spaces with topological duals $X^*, Y^*$ and $Z^*,$ respectively.

\begin{thm}\label{thcomp}
Let $f:X\To\overline{\Real}$ and $g:Y\To\overline{\Real}$ be proper, convex and lower semicontinuous functions and $A:Z\To X$ and $B:Z\To Y$ be linear continuous mappings such that $A^{-1}(\dom f)\cap B^{-1}(\dom g)\neq \emptyset$.
\begin{enumerate}
\item[(a)] For every set $U \subseteq Z^*$ the following statements are equivalent:
\begin{enumerate}
\item[(i)] The set $\{(A^*x^*+B^*y^*,r): r \in \Real, f^*(x^*)+g^*(y^*)\le r\}$ is closed regarding $U\times \Real$ in $(Z^*,w^*)\times \Real$;
\item[(ii)] $(f \circ A + g \circ B)^*(z^*) = \min \{f^*(x^*)+g^*(y^*):(x^*,y^*)\in X^*\times Y^*,A^*x^*+B^*y^*=z^*\}$ for all $z^*\in U.$
\end{enumerate}
\item[(b)] If $X,Y$ and $Z$ are Fr\'echet spaces and
$$(0,0) \in {^{\ic}(\dom f \times \dom g - (A\times B)(\Delta_Z))},$$
then the statements (i) and (ii) are valid for every $U \subseteq Z^*$.
\end{enumerate}
\end{thm}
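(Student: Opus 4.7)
The strategy is to consolidate the data by introducing
$F:X\times Y\to\oReal$, $F(x,y):=f(x)+g(y)$, which inherits properness,
convexity, and lower semicontinuity from $f$ and $g$ and satisfies
$F^*(x^*,y^*)=f^*(x^*)+g^*(y^*)$, and the linear continuous map
$L:Z\to X\times Y$, $L(z):=(Az,Bz)$, whose adjoint is
$L^*(x^*,y^*)=A^*x^*+B^*y^*$. Under this notation $f\circ A+g\circ B=F\circ L$, the set in~(i) is $M:=(L^*\times\id_{\Real})(\epi F^*)$, and~(ii)
asserts that for every $z^*\in U$ the value $(F\circ L)^*(z^*)$ equals the
infimum $(F^*\,\square_{L^*})(z^*):=\inf\{F^*(u^*):L^*(u^*)=z^*\}$ and
that this infimum is attained.

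For part~(a), the structural step is the identity
\[
\epi(F\circ L)^*=\cl(M),
\]
where the closure is taken in the weak$^*\times\Real$ topology of
$Z^*\times\Real$. The inclusion $\supseteq$ is immediate: any $u^*$ with
$L^*(u^*)=z^*$ yields $(F\circ L)^*(z^*)\le F^*(u^*)$, so
$M\subseteq\epi(F\circ L)^*$, and the latter is weak$^*$-closed. For
$\subseteq$ I would exploit $F^{**}=F$ to compute
$(F^*\,\square_{L^*})^*=F\circ L$ on $Z$, so that $(F\circ L)^*$ is the
weak$^*$-lower semicontinuous envelope of the convex function
$F^*\,\square_{L^*}$; a short $\varepsilon$-approximation identifies
$\cl(M)$ with the closure of $\epi(F^*\,\square_{L^*})$. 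From this identity,
(i)$\Rightarrow$(ii) follows by evaluating at $(z^*,\alpha)$ with
$\alpha:=(F\circ L)^*(z^*)\in\Real$ (the case $\alpha=+\infty$ is vacuous
and $\alpha=-\infty$ is excluded because
$A^{-1}(\dom f)\cap B^{-1}(\dom g)\ne\emptyset$ makes $F\circ L$ proper):
closedness of $M$ regarding $U\times\Real$ places $(z^*,\alpha)$ in $M$,
providing a minimizer. The converse (ii)$\Rightarrow$(i) is direct: for
$(z^*,r)\in\cl(M)\cap(U\times\Real)$ one has $(F\circ L)^*(z^*)\le r$, and
the minimizer supplied by~(ii) lies in $\epi F^*$ with $L^*\times\id$-image
$(z^*,r)$.

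For part~(b), the plan is to recast the problem on the Fr\'echet product
$Z\times X\times Y$ as the sum of $H(z,x,y):=f(x)+g(y)$ and
$K(z,x,y):=\delta_{G(L)}(z,x,y)$, and to invoke the Attouch--Br\'ezis--Zalinescu sum theorem. Direct conjugate computations give
$H^*(z^*,x^*,y^*)=\delta_{\{0\}}(z^*)+f^*(x^*)+g^*(y^*)$ and
$K^*(z^*,x^*,y^*)=\delta_{\{0\}}(z^*+A^*x^*+B^*y^*)$, so that the sum
formula $(H+K)^*(\xi^*)=\min\{H^*(\xi_1^*)+K^*(\xi_2^*):\xi_1^*+\xi_2^*=\xi^*\}$ evaluated at $\xi^*=(z^*,0,0)$ reduces exactly to~(ii) for
$U=Z^*$. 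The requisite qualification $0\in{^{\ic}(\dom H-\dom K)}$
unfolds as $0\in{^{\ic}(Z\times(\dom f\times\dom g-(A\times B)(\Delta_Z)))}$; since ${^{\ic}}$ commutes with taking a product with the
whole space~$Z$, this coincides with the hypothesis of~(b). Combining with
part~(a) applied to $U=Z^*$ then yields~(i) as well.

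I expect the main obstacle to be the $\subseteq$ half of the epigraph
identity in part~(a): one must verify carefully that the weak$^*$-closure
of $M$ really is the epigraph of the biconjugate $(F\circ L)^*$ and not of
some strictly larger envelope, and to handle the infinite-value cases
cleanly. A secondary delicate point in~(b) is to match the
${^{\ic}}$-qualification on the product space precisely with the one
assumed and to select a version of the Attouch--Br\'ezis--Zalinescu
theorem proved in the required ${^{\ic}}$-generality for Fr\'echet spaces.
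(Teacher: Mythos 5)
Your argument is correct, but it takes a genuinely different route from the paper's. For part (a) the paper does not prove the equivalence from scratch: it introduces the perturbation function $\Phi(z,x,y)=f(Az+x)+g(Bz+y)$ on $Z\times X\times Y$ and simply invokes the known characterization of stable strong duality with attainment (Bo\c t--Csetnek, Theorem 2 in \cite{Bo-Cse}), which says that exactness of the dual formula on $U$ is equivalent to $\pr_{Z^*\times\Real}(\epi\Phi^*)$ being closed regarding $U\times\Real$ in $(Z^*,w^*)\times\Real$; the only work left is identifying that projection with the set in (i). Your proposal instead re-proves that mechanism directly in the composition setting via the identity $\epi(F\circ L)^*=\cl\bigl((L^*\times\id_{\Real})(\epi F^*)\bigr)$, obtained from $(F^*\square_{L^*})^*=F\circ L$ and Fenchel--Moreau in the pairing $\langle Z^*,Z\rangle$; this is sound provided you use the feasibility hypothesis $A^{-1}(\dom f)\cap B^{-1}(\dom g)\neq\emptyset$ to produce an affine minorant of $F^*\square_{L^*}$, so that its weak$^*$-lsc hull never takes the value $-\infty$ and the biconjugate really is that hull. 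For part (b) the paper again works with $\Phi$ and cites \cite[Corollary 2.7.3]{Zal-carte}, noting $\pr_{X\times Y}(\dom\Phi)=\dom f\times\dom g-(A\times B)(\Delta_Z)$, whereas you decompose on $Z\times X\times Y$ as $H+\delta_{G(L)}$ and appeal to an Attouch--Br\'ezis-type sum theorem under the ${}^{\ic}$-qualification; such a statement is indeed available in Fr\'echet spaces (e.g. Z\u alinescu's book, Theorem 2.8.7), your computation $\dom H-\dom K=Z\times(\dom f\times\dom g-(A\times B)(\Delta_Z))$ is right, and stripping the factor $Z$ under ${}^{\ic}$ is legitimate, so the qualifications match. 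What your route buys is self-containedness in (a) (no external duality-characterization theorem) at the cost of redoing that theorem's proof and handling the improper/infinite-value cases yourself; the paper's route is shorter and treats (a) and (b) uniformly through one perturbation function, outsourcing both steps to cited results.
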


\begin{proof}
(a) Consider an arbitrary set $U \subseteq Z^*$ and the perturbation function
$$\Phi:Z\times X\times Y\To\oReal,\, \Phi(z,x,y)=f(Az+x)+g(Bz+y),$$
which is proper, convex and lower semicontinuous and fulfills
$$(0,0) \in \pr\nolimits_{X \times Y}(\dom \Phi) = \dom f \times \dom g - (A\times B)(\Delta_Z).$$
Its conjugate function looks for all $(z^*,x^*,y^*) \in Z^* \times X^* \times Y^*$ like
$$\Phi^*(z^*,x^*,y^*)=\delta_{\{0\}}(z^*-A^*x^*-B^*y^*)+f^*(x^*)+g^*(y^*).$$
Thus (ii) is nothing else than
$$(\Phi(\cdot,0,0))^*(z^*)=\min_{(x^*,y^*)\in X^* \times Y^*}\Phi^*(z^*,x^*,y^*) \ \forall z^* \in U.$$
According to \cite[Theorem 2]{Bo-Cse}, this is further equivalent to
\begin{equation}
\pr\nolimits_{Z^* \times \Real}(\epi \Phi^*) \ \mbox{is closed regarding} \ U\times \Real \ \mbox{in} \ (Z^*,w^*)\times \Real.
\end{equation}
As one can easily see, it holds
$$\pr\nolimits_{Z^* \times \Real}(\epi \Phi^*) = \{(A^*x^*+B^*y^*,r):f^*(x^*)+g^*(y^*)\le r\}$$
and in this way the equivalence (i)$\Leftrightarrow$(ii) is proven.

(b) Since $X,Y$ and $Z$ are Fr\'echet spaces and $(0,0) \in {^{\ic}\left(\pr\nolimits_{X \times Y}(\dom \Phi) \right)}$,
by \cite[Corollary 2.7.3]{Zal-carte} it follows that for all $z^* \in Z^*$
$$(\Phi(\cdot,0,0))^*(z^*)=\min_{(x^*,y^*)\in X^* \times Y^*}\Phi^*(z^*,x^*,y^*)$$
or, equivalently,
$$(f \circ A + g \circ B)^*(z^*) = \min \{f^*(x^*)+g^*(y^*):(x^*,y^*)\in X^*\times Y^*,A^*x^*+B^*y^*=z^*\},$$
which concludes the proof.
\end{proof}

\begin{rmk}\label{r2.1}\rm
In the hypotheses of Theorem \ref{thcomp}, when $X$, $Y$ and $Z$ are Fr\'echet spaces, then, according to \cite[Proposition 2.7.2]{Zal-carte},
$${^{\ic}(\dom f \times \dom g - (A\times B)(\Delta_Z))} = \ri(\dom f \times \dom g - (A\times B)(\Delta_Z)).$$
\end{rmk}

\begin{rmk}\label{r2.2}\rm
We refer the reader to \cite{Bot} for examples where, even $X$, $Y$ and $Z$ are finite dimensional spaces,
the statements (i) and (ii) in Theorem \ref{thcomp}(a) are fulfilled, while the interiority-type condition in Theorem \ref{thcomp}(b)
fails.
\end{rmk}

\begin{rmk}\label{r2.3}\rm According to the previous theorem, one obtains when $Z=X$, $A=\id_X$ and $X$ and $Y$ are Fr\'echet spaces as a sufficient condition for the
exact conjugate formula
\begin{equation}\label{eqgA}
(f + g \circ B)^*(z^*) = \min \{f^*(z^* - B^*y^*)+g^*(y^*):y^*\in Y^*\} \ \forall z^*\in X^*
\end{equation}
the interiority-type condition
$$(0,0) \in {^{\ic}(\dom f \times \dom g - \Delta_Z^B)}.$$
Via Lemma \ref{l2.1} it follows that this is nothing else than
$$0 \in {^{\ic} \left(\dom g - B(\dom f)\right)},$$
which is a regularity condition for \eqref{eqgA} that has been already considered in literature (see, for instance, \cite{Zal-carte}).
\end{rmk}

\subsection{A bivariate infimal convolution formula adequate for the generalized parallel sum}

Let $X$ and $Y$ be two Banach spaces with  $X^*$ and $Y^*$ their topological dual spaces and $X^{**}$ and $Y^{**}$ their topological bidual spaces, respectively.
Further, let $f:X\times X^*\To\overline{\Real}$ and $g:Y\times Y^*\To\overline{\Real}$ be two given functions and $A:X\To Y$ a linear continuous mapping. In this subsection we deal with the following \textit{extended bivariate infimal convolutions} $f\bigcirc_1^A g:X\times X^*\To\oReal$,
$$(f\bigcirc_1^A g)(x,x^*)=\inf\{f(u,x^*)+g(Aw,v^*):u,w\in X, v^*\in Y^*, u+w=x,\, A^*v^*=x^*\},$$
and $f^*\bigcirc_2^A g^*:X^*\times X^{**}\To\oReal$,
$$(f^*\bigcirc_2^A g^*)(x^*,x^{**})=\inf\{f^*(x^*,u^{**})+g^*(v^*,A^{**}w^{**}): u^{**},w^{**}\in X^{**}, v^*\in Y^*, u^{**}+w^{**}=x^{**},\, A^*v^*=x^*\},$$
respectively. By making use of Theorem \ref{thcomp}, we can prove the following result.

\begin{thm}\label{t2.5} Assume that $f:X\times X^*\To\overline{\Real}$ and $g:Y\times Y^*\To\overline{\Real}$ are  proper, convex and lower semicontinuous functions such that $\dom g \times \pr\nolimits_{X^*}(\dom f) \cap \im A \times\Delta_{Y^*}^{A^*}\neq\emptyset$.
\begin{enumerate}
\item[(a)] The following statements are equivalent:
\begin{enumerate}
\item[(i)] The set $\{(u^*,A^*v^*,A^{**}u^{**}+v^{**},r): r \in \Real, f^*(u^*,u^{**})+g^*(v^*,v^{**})\le r\}$ is closed regarding $\Delta_{X^*}\times \im A^{**} \times \Real$ in  $(X^*,w^*)\times (X^*,w^*)\times (Y^{**},w^*)\times \Real$;
\item[(ii)] $(f\bigcirc_1^A g)^*(x^*,x^{**})=(f^*\bigcirc_2^A g^*)(x^*,x^{**})$ and $f^*\bigcirc_2^A g^*$ is exact (that is, the infimum in the definition of $(f^*\bigcirc_2^A g^*)(x^*,x^{**})$ is attained) for every $(x^*,x^{**})\in X^*\times X^{**}.$
\end{enumerate}
\item[(b)] If
$$(0,0,0) \in {^{\ic}\left(\dom g \times \pr\nolimits_{X^*}(\dom f)-\im A \times\Delta_{Y^*}^{A^*}\right)},$$
then the statements (i) and (ii) are true.
\end{enumerate}
\end{thm}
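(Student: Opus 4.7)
My plan is to reduce Theorem \ref{t2.5} to Theorem \ref{thcomp} by realising $f\bigcirc_1^A g$ as the marginal of a sum of two compositions along suitable linear mappings. Setting $Z := X\times X\times Y^*$, I would introduce the continuous linear operators $L_1 : Z \to X\times X^*$, $L_1(u,w,v^*) := (u,A^*v^*)$, $L_2 : Z \to Y\times Y^*$, $L_2(u,w,v^*) := (Aw,v^*)$, and $T : Z \to X\times X^*$, $T(u,w,v^*) := (u+w,A^*v^*)$. A direct calculation yields $L_1^*(u^*,u^{**}) = (u^*,0,A^{**}u^{**})$, $L_2^*(v^*,v^{**}) = (0,A^*v^*,v^{**})$, and $T^*(x^*,x^{**}) = (x^*,x^*,A^{**}x^{**})$, so in particular $\im T^* = \Delta_{X^*}\times\im A^{**}$. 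Because the constraint $T(u,w,v^*)=(x,x^*)$ forces $A^*v^*=x^*$, the definition of $f\bigcirc_1^A g$ rewrites as
\[
(f\bigcirc_1^A g)(x,x^*) \;=\; \inf\{F(z):T(z)=(x,x^*)\}, \qquad F:=f\circ L_1 + g\circ L_2,
\]
and the standard conjugacy rule for marginal functions gives $(f\bigcirc_1^A g)^*(x^*,x^{**}) = F^*(T^*(x^*,x^{**}))$. A short check shows that the nonemptiness hypothesis of Theorem \ref{t2.5} is equivalent to $L_1^{-1}(\dom f)\cap L_2^{-1}(\dom g)\ne\emptyset$, which is the hypothesis of Theorem \ref{thcomp}.

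For part (a), I would apply Theorem \ref{thcomp}(a) to $f,g,L_1,L_2$ with the test set $U := \Delta_{X^*}\times\im A^{**}\subseteq Z^*$. The set $\{(L_1^*\alpha+L_2^*\beta,r):f^*(\alpha)+g^*(\beta)\le r\}$ of Theorem \ref{thcomp}(a)(i) evaluates to $\{(u^*,A^*v^*,A^{**}u^{**}+v^{**},r):f^*(u^*,u^{**})+g^*(v^*,v^{**})\le r\}$, exactly the set in Theorem \ref{t2.5}(a)(i), and the test set $U\times\Real$ matches. Theorem \ref{thcomp}(a) then yields for each $z^*=(x^*,x^*,A^{**}x^{**})\in U$
\[
(f\bigcirc_1^A g)^*(x^*,x^{**}) = \min\{f^*(x^*,u^{**})+g^*(v^*,v^{**}) : A^*v^*=x^*,\; A^{**}u^{**}+v^{**}=A^{**}x^{**}\}.
\]
The bijective reparametrization $w^{**}:=x^{**}-u^{**}$ (equivalently $v^{**}=A^{**}w^{**}$) rewrites the right-hand side, preserving attainment, as $(f^*\bigcirc_2^A g^*)(x^*,x^{**})$, yielding the equivalence (i)$\Leftrightarrow$(ii) of Theorem \ref{t2.5}(a).

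For part (b), Theorem \ref{thcomp}(b) requires $(0,0)\in{^{\ic}(\dom f\times\dom g-(L_1\times L_2)(\Delta_Z))}$, where the difference lives in the Fr\'echet space $X\times X^*\times Y\times Y^*$. A typical element has the form $(x-u,\,x^*-A^*v^*,\,y-Aw,\,y^*-v^*)$, and the first $X$-component is free because $u$ occurs nowhere else. Therefore the intrinsic interior condition reduces to the analogous one for the projection onto the remaining three components, which, after a coordinate permutation, is precisely $\dom g\times\pr_{X^*}(\dom f)-\im A\times\Delta_{Y^*}^{A^*}$. Hence the hypothesis of Theorem \ref{t2.5}(b) is equivalent to the hypothesis of Theorem \ref{thcomp}(b) for our $f,g,L_1,L_2$, and (i), (ii) follow.

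The main obstacle is the initial algebraic setup: one has to choose $Z$, $L_1$, $L_2$, and $T$ so that $f\bigcirc_1^A g$ is a marginal of $F=f\circ L_1+g\circ L_2$, so that $\im T^*$ equals the natural test set $\Delta_{X^*}\times\im A^{**}$, and so that the reparametrization $w^{**}=x^{**}-u^{**}$ converts the output of Theorem \ref{thcomp}(a) into $f^*\bigcirc_2^A g^*$. Once this structural identification is in place, the remainder is a careful but routine computation of adjoints, matching of sets, and tracking of attainment; Lemma \ref{l2.1} plays no direct role here since the relevant reduction of the interiority condition comes instead from the free $X$-component.
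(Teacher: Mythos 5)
Your proposal is correct and takes essentially the same route as the paper: the paper applies Theorem \ref{thcomp} with the same auxiliary space $Z=X\times X\times Y^*$ and the test set $U=\Delta_{X^*}\times \im A^{**}$, using lifted functions $F,G$ and mappings $M=\id_X\times\id_X\times A^*$, $N=\id_X\times A\times\id_{Y^*}$ that satisfy $F\circ M=f\circ L_1$ and $G\circ N=g\circ L_2$. Your marginal-function identity for $(f\bigcirc_1^A g)^*$ and the reduction of the interiority condition via the free $X$-component are just streamlined versions of the paper's explicit conjugate computation and its reduction ${^{\ic}}\bigl(X\times X\times X\times E\bigr)$ to ${^{\ic}}E$.
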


\begin{proof}
Consider the proper, convex and lower semicontinuous functions $F:X\times X\times X^*\To\oReal,$ $\,F(u,w,u^*)=f(u,u^*)$ and $G:X\times Y\times Y^*\To\oReal,\, G(u,v,v^*)=g(v,v^*)$ and the linear continuous mappings $M:X \times X \times Y^* \rightarrow X\times X\times X^*,$ $M= \id_X\times \id_X\times A^*$, and $N : X \times X \times Y^* \rightarrow X\times Y\times Y^*$, $N=\id_X\times A\times \id_{Y^*}$. Since $\dom g \times \pr\nolimits_{X^*}(\dom f) \cap \im A \times\Delta_{Y^*}^{A^*}\neq\emptyset$, we obtain that $M^{-1}(\dom F)\cap N^{-1}(\dom G)\neq\emptyset.$

(a) According to Theorem \ref{thcomp}(a), applied for $U:=\Delta_{X^*}\times \im A^{**} \subseteq X^*\times X^*\times Y^{**}$, we have that
\begin{equation}\label{interm1}
\begin{array}{l}
\{(M^*(u_1^*,w^*,u^{**})+N^*(u_2^*,v^*,v^{**}),r): r \in \Real, F^*(u_1^*,w^*,u^{**})+G^*(u_2^*,v^*,v^{**})\le r\} \ \mbox{is closed regarding}\\
\Delta_{X^*}\times \im A^{**} \times \Real \ \mbox{in} \ (X^*,w^*)\times (X^*,w^*)\times (Y^{**},w^*)\times \Real
\end{array}
\end{equation}
if and only if
\begin{equation}\label{interm2}
\begin{array}{c}
(F \circ M + G \circ N)^*(x^*,x^*,A^{**}x^{**}) =\\
\min\limits_{\stackrel{(u_1^*,w^*,u^{**})\in X^*\times X^*\times X^{**}}{(u_2^*,v^*,v^{**})\in X^*\times Y^*\times Y^{**}}}\{F^*(u_1^*,w^*,u^{**})+G^*(u_2^*,v^*,v^{**}):M^*(u_1^*,w^*,u^{**})+N^*(u_2^*,v^*,v^{**})=(x^*,x^*,A^{**}x^{**})\}\\
\mbox{for all} \ (x^*,x^{**})\in  X^* \times X^{**}.
\end{array}
\end{equation}
Since  $F^*(u^*,w^*,u^{**})=\d_{\{0\}}(w^*)+f^*(u^*,u^{**})$ for all $(u^*,w^*,u^{**}) \in X^* \times X^* \times X^{**}$ and
$G^*(u^*,v^*,v^{**})=\d_{\{0\}}(u^*)+g^*(v^*,v^{**})$ for all $(u^*,v^*,v^{**}) \in X^* \times Y^* \times Y^{**}$, one can easily see that
$$\{(M^*(u_1^*,w^*,u^{**})+N^*(u_2^*,v^*,v^{**}),r): r \in \Real, F^*(u_1^*,w^*,u^{**})+G^*(u_2^*,v^*,v^{**})\le r\} =$$ $$\{(u^*,A^*v^*,A^{**}u^{**}+v^{**},r):f^*(u^*,u^{**})+g^*(v^*,v^{**})\le r\},$$
which means that the statement in \eqref{interm1} is nothing else than (i).

On the other hand, for all $(x^*,x^{**})\in  X^* \times X^{**}$ it holds
$$(F \circ M + G \circ N)^*(x^*,x^*,A^{**}x^{**})=$$
$$\sup_{(u,w,v^*)\in X\times X\times Y^*}\{\<(x^*,x^*,A^{**}x^{**}),(u,w,v^*)\>-(F\circ M)(u,w,v^*)-(G\circ N)(u,w,v^*)\}=$$
$$\sup_{(u,w,v^*)\in X\times X\times Y^*}\{\<(x^*,x^{**}),(u+w,A^*v^*)\>-f(u,A^*v^*)-g(Aw,v^*)\}=$$
$$\sup_{(s,s^*)\in X\times X^*} \left \{\<(x^*,x^{**}),(s,s^*)\>-\inf_{(u,w,v^*)\in X\times X\times Y^*}\{f(u,s^*)+g(Aw,v^*):\, u+w=s,\, A^*v^*=s^*\}\right\}=$$
$$(f\bigcirc_1^A g)^*(x^*,x^{**})$$
and
$$\min\limits_{\stackrel{(u_1^*,w^*,u^{**})\in X^*\times X^*\times X^{**}}{(u_2^*,v^*,v^{**})\in X^*\times Y^*\times Y^{**}}}\{F^*(u_1^*,w^*,u^{**})+G^*(u_2^*,v^*,v^{**}):M^*(u_1^*,w^*,u^{**})+N^*(u_2^*,v^*,v^{**})=(x^*,x^*,A^{**}x^{**})\}=$$
$$\min_{\stackrel{(u^*,0,u^{**})\in X^*\times X^*\times X^{**}}{(0,v^*,v^{**})\in X^*\times Y^*\times Y^{**}}}\{f^*(u^*,u^{**})+g^*(v^*,v^{**}):M^*(u^*,0,u^{**})+N^*(0,v^*,v^{**})=(x^*,x^*,A^{**}x^{**})\}=$$
$$\min_{(u^{**},w^{**},v^*)\in X^{**}\times X^{**}\times Y^*}\{f^*(x^*,u^{**})+g^*(v^*,A^{**}w^{**}):A^*v^*=x^*,\, u^{**}+w^{**}=x^{**}\}=$$
$$(f^*\bigcirc_2^A g^*)(x^*,x^{**}),$$
which means that the the statement in \eqref{interm2} says actually that $(f\bigcirc_1^A g)^*(x^*,x^{**})=(f^*\bigcirc_2^A g^*)(x^*,x^{**})$ and $f^*\bigcirc_2^A g^*$ is exact for every $(x^*,x^{**})\in X^*\times X^{**}$. This leads to the desired conclusion.

(b) The assertion is a direct consequence of Theorem \ref{thcomp}(b), as, obviously,
$$(0,0,0,0,0,0) \in {^{\ic}\left(\dom F\times \dom G- (M\times N)(\Delta_{X\times X\times Y^*})\right)} \Leftrightarrow$$
$$(0,0,0,0,0,0) \in {^{\ic}\left(X \times X \times X \times  \left (\dom g \times \pr\nolimits_{X^*}(\dom f)-\im A \times\Delta_{Y^*}^{A^*} \right)\right)} \Leftrightarrow$$
$$(0,0,0) \in {^{\ic}\left(\dom g \times \pr\nolimits_{X^*}(\dom f)-\im A \times\Delta_{Y^*}^{A^*}\right)}.$$
\end{proof}

\begin{rmk}\label{r2.4}\rm
In the hypotheses of Theorem \ref{t2.5} and by keeping the notations used in its proof, according to Remark \ref{r2.1}, we have
$${^{\ic}\left(\dom F \times \dom G - (M \times N)(\Delta_{X\times X\times Y^*})\right)} = \ri\left(\dom F \times \dom G - (M\times N)(\Delta_{X\times X\times Y^*})\right),$$
which is equivalent to
$${^{\ic}\left(\dom g \times \pr\nolimits_{X^*}(\dom f)-\im A \times\Delta_{Y^*}^{A^*}\right)} = \ri\left(\dom g \times \pr\nolimits_{X^*}(\dom f)-\im A \times\Delta_{Y^*}^{A^*}\right).$$
\end{rmk}

In reflexive Banach spaces the equivalence in Theorem \ref{t2.5}(a) gives rise to the following result.

\begin{cor}\label{c2.1}Let $X$ and $Y$ be reflexive Banach spaces and $f:X\times X^*\To\overline{\Real}$ and $g:Y\times Y^*\To\overline{\Real}$ proper, convex and lower semicontinuous functions such that $\pr_{X^*}(\dom f)\cap A^*(\pr_{Y^*}(\dom g))\neq\emptyset$. Then the following statements are equivalent:
\begin{enumerate}
\item[(i)] the set $\{(u^*,A^*v^*,Au+v,r): r \in \Real, f^*(u^*,u)+g^*(v^*,v)\le r\}$ is closed regarding $\Delta_{X^*}\times \im A \times \Real$ in  $(X^*,\|\cdot\|_{*})\times (X^*,\|\cdot\|_{*})\times (Y,\|\cdot\|)\times \Real$;
\item[(ii)] $(f\bigcirc_1^A g)^*(x^*,x)=(f^*\bigcirc_2^A g^*)(x^*,x)$ and $f^*\bigcirc_2^A g^*$ is exact for every $(x^*,x)\in X^*\times X.$
\end{enumerate}
\end{cor}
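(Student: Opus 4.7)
The plan is to obtain this equivalence as a direct specialization of Theorem \ref{t2.5}(a) to reflexive Banach spaces. Using that $X$ and $Y$ are reflexive, I would identify $X^{**}=X$ and $Y^{**}=Y$ via the canonical isometric isomorphisms, and consequently $A^{**}=A$. Under these identifications the formula for $f^*\bigcirc_2^A g^*$ from Subsection 2.2 reduces to the one evaluated at $(x^*,x)\in X^*\times X$ appearing in statement (ii), and the set
$$\{(u^*,A^*v^*,A^{**}u^{**}+v^{**},r):\, f^*(u^*,u^{**})+g^*(v^*,v^{**})\le r\}$$
from Theorem \ref{t2.5}(a)(i) becomes exactly the set figuring in (i). Moreover, the hypothesis $\pr_{X^*}(\dom f)\cap A^*(\pr_{Y^*}(\dom g))\neq\emptyset$ provides the non-emptiness condition required in order to invoke Theorem \ref{t2.5}.

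The only substantive point is the translation of topologies. Theorem \ref{t2.5}(a)(i) requires closedness regarding $\Delta_{X^*}\times\im A^{**}\times\Real$ in $(X^*,w^*)\times(X^*,w^*)\times(Y^{**},w^*)\times\Real$, whereas (i) of the present corollary requires closedness regarding $\Delta_{X^*}\times\im A\times\Real$ in the product of norm topologies on $X^*\times X^*\times Y\times\Real$. By reflexivity, the weak$^*$ topology on $X^*$ coincides with $w(X^*,X)$, and $(Y^{**},w^*)=(Y,w(Y,Y^*))$; so both conditions live on the same underlying product of Banach spaces, one in the weak and the other in the norm topology.

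The key observation is that the set
$$S:=\{(u^*,A^*v^*,Au+v,r):\, f^*(u^*,u)+g^*(v^*,v)\le r\}$$
is convex, being the image of the convex epigraphical set $\{(u^*,v^*,u,v,r): f^*(u^*,u)+g^*(v^*,v)\le r\}$ under the continuous linear mapping $(u^*,v^*,u,v,r)\mapsto (u^*,A^*v^*,Au+v,r)$. By Mazur's theorem the norm closure and the weak closure of any convex subset of a Banach space coincide; and since the product of finitely many weak topologies on Banach spaces equals the weak topology of the product Banach space, the norm closure of $S$ in $X^*\times X^*\times Y\times\Real$ agrees with its weak closure. As the property ``closed regarding $\Delta_{X^*}\times\im A\times\Real$'' depends only on the closure of $S$, (i) of the corollary holds in the norm topology if and only if the corresponding condition of Theorem \ref{t2.5}(a)(i) holds in the weak$^*$ topology. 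Applying Theorem \ref{t2.5}(a) then yields (i)$\Leftrightarrow$(ii).

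The main obstacle is essentially bookkeeping: tracking the identifications $X^{**}=X$, $Y^{**}=Y$, $A^{**}=A$ through the definition of $f^*\bigcirc_2^A g^*$ and through the set appearing in the closedness condition, and verifying that the convexity of $S$ permits replacing the norm topology by the weak$^*$ topology without altering the condition. Once these identifications are in place the result is immediate from Theorem \ref{t2.5}(a).
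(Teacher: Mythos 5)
Your proposal is correct and takes essentially the same route as the paper, which obtains Corollary \ref{c2.1} simply as the specialization of Theorem \ref{t2.5}(a) under the reflexivity identifications $X^{**}=X$, $Y^{**}=Y$, $A^{**}=A$. The only point the paper leaves implicit is the passage from the weak$^*$ (i.e.\ weak) topologies to the norm topologies in the closedness condition, which you settle correctly by observing that the set in (i) is convex (being a linear image of an epigraph) and invoking Mazur's theorem.
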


\section{The maximal monotonicity of Gossez type (D) of $S||_A T$}

In what follows we assume that $X$ and $Y$ are real nonzero Banach spaces, that $S:X\rightrightarrows X^*$ and $T:Y\rightrightarrows Y^*$ are two monotone operators and that $A:X\To Y$ is a linear continuous mapping. For $\overline{S} : X^{**} \rightrightarrows X^*$ and $\overline T:Y^{**} \rightrightarrows Y^*$, Gossez's monotone closures of $S$ and $T$, respectively, we consider their \textit{extended generalized parallel sum defined via $A$}, which is the multivalued operator defined as
$$\overline{S}||_A\overline{T}:X\rightrightarrows X^*,\overline{S}||_A\overline{T}(x):=(\overline{S}^{-1}+(A^*\overline{T}A^{**})^{-1})^{-1}(x) \ \forall x \in X.$$
The following result proposes two sufficient conditions ensuring the maximal monotonicity of Gossez type (D) of $\overline{S}||_A\overline{T}$, provided that both operators are maximal monotone of Gossez type (D), and it will give rise to a characterization of the maximal monotonicity of the \textit{generalized parallel sum of $S$ and $T$ defined via $A$},
$$S||_A T:X\rightrightarrows {X^*},\,S||_A T(x):= (S^{-1}+(A^*TA)^{-1})^{-1}(x) \ \forall x \in X.$$

\begin{thm}\label{t3.1} Let $S:X\rightrightarrows X^*$ and $T:Y\rightrightarrows Y^*$ be two maximal monotone operators of Gossez type (D) with strong representative functions $h_S$ and $h_T$, respectively, and $A:X\To Y$ a linear continuous mapping such that $\dom h_T \times \pr_{X^*}(\dom h_S) \cap \im A \times\Delta_{Y^*}^{A^*}\neq\emptyset$. Assume that one of the following conditions is fulfilled:
\begin{enumerate}
\item[(a)] $(0,0,0)\in {^{\ic}(\dom h_T \times \pr_{X^*}(\dom h_S)-\im A \times\Delta_{Y^*}^{A^*})}$;
\item[(b)] the set $\{(u^*,A^*v^*,A^{**}u^{**}+v^{**},r): r \in \Real, h_S^*(u^*,u^{**})+h_T^*(v^*,v^{**})\le r\}$ is closed regarding $\Delta_{X^*}\times \im A^{**}\times \Real$ in  $(X^*,w^*)\times (X^*,w^*)\times (Y^{**},w^*)\times \Real$.
\end{enumerate}
Then the function $h:X\times X^*\To\oReal,\, h(x,x^*)= \cl\nolimits_{\|\cdot\|\times\|\cdot\|_*}(h_S\bigcirc_1^A h_T)(x,x^*)$, is a strong representative function of $\overline{S}||_A\overline{ T}$ and the extended generalized parallel sum $\overline{S}||_A \overline{T}$ is a maximal monotone operator of Gossez type (D).
\end{thm}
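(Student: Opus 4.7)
The plan is to take $h$ as the natural candidate for a strong representative function of $\overline{S}||_A\overline{T}$ and to reduce the whole statement to the maximal monotonicity criterion of Theorem \ref{max-cr-nref}. The key engine is the bivariate infimal convolution formula of Theorem \ref{t2.5}: under hypothesis (a) I apply Theorem \ref{t2.5}(b) directly, while under hypothesis (b) I apply the equivalence (i) $\Leftrightarrow$ (ii) of Theorem \ref{t2.5}(a). In either case I obtain the exact identity
\[
(h_S\bigcirc_1^A h_T)^*(x^*,x^{**})=(h_S^*\bigcirc_2^A h_T^*)(x^*,x^{**})\quad\forall(x^*,x^{**})\in X^*\times X^{**},
\]
with the infimum defining the right-hand side attained at every point.

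I would first verify that $h$ is a proper, convex, lower semicontinuous function bounded below by $c$. Convexity of $h_S\bigcirc_1^A h_T$ is automatic (value function of a convex program with linear equality constraints); properness follows from the assumption $\dom h_T\times\pr_{X^*}(\dom h_S)\cap\im A\times\Delta_{Y^*}^{A^*}\neq\emptyset$; and a direct computation using $h_S\ge c$, $h_T\ge c$, $u+w=x$ and $A^*v^*=x^*$ gives $h_S\bigcirc_1^A h_T\ge c$. Passing to the norm lower semicontinuous hull preserves this bound by continuity of $c$, so $h\ge c$, and since the Fenchel conjugate is insensitive to passage to the closure, $h^*=(h_S\bigcirc_1^A h_T)^*=h_S^*\bigcirc_2^A h_T^*$. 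Fixing $(x^*,x^{**})$ and letting $(u^{**},w^{**},v^*)$ attain the minimum in $h^*(x^*,x^{**})$, Theorem \ref{th-gos}(d) applied to the strong representatives $h_S$ and $h_T$ yields
\[
h_S^*(x^*,u^{**})\ge\langle u^{**},x^*\rangle,\qquad h_T^*(v^*,A^{**}w^{**})\ge\langle w^{**},A^*v^*\rangle=\langle w^{**},x^*\rangle,
\]
whose sum, combined with $u^{**}+w^{**}=x^{**}$, delivers $h^*(x^*,x^{**})\ge\langle x^{**},x^*\rangle$. Theorem \ref{max-cr-nref} now produces a maximal monotone operator $M:X\rightrightarrows X^*$ with $G(M)=\{(x,x^*):h(x,x^*)=\langle x^*,x\rangle\}=\{(x,x^*):h^*(x^*,x)=\langle x^*,x\rangle\}$, for which $h$ is tautologically a representative function already satisfying the inequality of Theorem \ref{th-gos}(c); hence $M$ is of Gossez type (D) and $h$ is a strong representative of $M$.

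The main remaining step is the identification $M=\overline{S}||_A\overline{T}$. Using the description $G(M)=\{h^*(x^*,x)=\langle x^*,x\rangle\}$, with $x$ viewed as an element of $X^{**}$ via the canonical embedding, at an optimal triple $(u^{**},w^{**},v^*)$ satisfying $u^{**}+w^{**}=x$ and $A^*v^*=x^*$ the equality $h^*(x^*,x)=\langle x^*,x\rangle$ forces both termwise lower bounds from the previous paragraph to become equalities simultaneously: $h_S^*(x^*,u^{**})=\langle u^{**},x^*\rangle$ and $h_T^*(v^*,A^{**}w^{**})=\langle A^{**}w^{**},v^*\rangle$. Since $h_S^*$ and $h_T^*$ are representative functions of $\overline{S}^{-1}$ and $\overline{T}^{-1}$ (as recalled at the end of Section 1.2), Proposition \ref{frepr}(iii) applied to each translates these equalities into $x^*\in\overline{S}(u^{**})$ and $v^*\in\overline{T}(A^{**}w^{**})$, and the four conditions together are exactly the defining property of $x^*\in\overline{S}||_A\overline{T}(x)$; the reverse inclusion is obtained by running the same chain forward, invoking the exactness supplied by Theorem \ref{t2.5} to assemble a triple that realizes $h^*(x^*,x)=\langle x^*,x\rangle$. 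The most delicate point I anticipate is the bookkeeping across the canonical embedding $X\hookrightarrow X^{**}$ and the careful handling of the two coupling functions on $X\times X^*$ and $X^*\times X^{**}$, but no new analytical ingredient is required.
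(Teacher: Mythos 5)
Your proposal is correct and follows essentially the same route as the paper: the closed infimal convolution $h$ as candidate, Theorem \ref{t2.5} for the exact conjugate formula under (a) or (b), the lower bounds $h\ge c$ and $h^*\ge$ the bidual coupling via the strong representative property, Theorem \ref{max-cr-nref} with Theorem \ref{th-gos}, and the graph identification through Proposition \ref{frepr}(iii) applied to $h_S^*$, $h_T^*$ as representatives of $\overline S^{-1}$, $\overline T^{-1}$. The only cosmetic remarks are that the inclusion $G(\overline S||_A\overline T)\subseteq G(M)$ needs only the identity $h^*=h_S^*\bigcirc_2^A h_T^*$ (the exactness is what is genuinely needed for the opposite inclusion), and that the lower bound $h^*\ge\langle x^{**},x^*\rangle$ is cleaner by bounding the infimum termwise rather than passing through a minimizer.
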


\begin{proof} Obviously, $h:X\times X^*\To\oReal$ is convex and (strong) lower semicontinuous and, due to the feasibility condition $\dom h_T \times \pr_{X^*}(\dom h_S) \cap \im A \times\Delta_{Y^*}^{A^*}\neq\emptyset$, $h$ is not identical to $+\infty$. Since one of the conditions (a) and (b) is fulfilled, then one has, via Theorem \ref{t2.5}, that $h^*(x^*,x^{**})=(h_S\bigcirc_1^A h_T)^*(x^*,x^{**})=(h_S^*\bigcirc_2^A h_T^*)(x^*,x^{**})$ and $h_S^*\bigcirc_2^A h_T^*$ is exact for every $(x^*,x^{**}) \in X^*\times X^{**}$.

Take an arbitrary $(x,x^*) \in X \times X^*$. Then we have
$$(h_S\bigcirc_1^A h_T)(x,x^*)=\inf \{h_S(u,x^*)+h_T(Aw,v^*):u,w\in X, v^* \in Y^*, u+w=x,\, A^*v^*=x^*\}$$
$$\geq \inf \{\<x^*,u\>+\<x^*,w\>:u,w\in X, u+w=x\}=\<x^*,x\>.$$
Hence, $h(x,x^*) = \cl_{\|\cdot\|\times\|\cdot\|_*}(h_S\bigcirc_1^A h_T)(x,x^*)\ge \<x^*,x\>$, which implies that $h \geq c$, concomitantly ensuring that $h$ is proper.

Take an arbitrary $(x^*,x^{**}) \in X^* \times X^{**}$. Then we have
$$h^*(x^*,x^{**}) = (h_S^*\bigcirc_2^A h_T^*)(x^*,x^{**})$$
$$=\inf\{h_S^*(x^*,u^{**})+h_T^*(v^*,A^{**}w^{**}): u^{**},w^{**}\in X^{**}, v^*\in Y^*, u^{**}+w^{**}=x^{**},\, A^*v^*=x^*\}$$
$$\geq \inf\{\<u^{**},x^*\>+\<w^{**},x^*\>: u^{**},w^{**}\in X^{**}, u^{**}+w^{**}=x^{**}\}=\<x^{**},x^*\>.$$

Thus, according to Theorem \ref{max-cr-nref} and Theorem \ref{th-gos}, the operator with the graph
$$\{(x,x^*)\in X\times X^*:h(x,x^*)=\langle x^*,x\rangle \}$$
is maximal monotone of Gossez type (D) and one has
$$\{(x,x^*)\in X\times X^*:h(x,x^*)=\langle x^*,x\rangle \} = \{(x,x^*)\in X\times X^*:h^*(x^*,x)=\langle x^*,x\rangle \}.$$
In order to conclude the proof, we show that
$$G(\overline{S}||_A \overline{T}) = \{(x,x^*)\in X\times X^*:h^*(x^*,x)=\langle x^*,x\rangle \}$$
and this will mean that $h$ is a strong representative function of $\overline{S}||_A \overline{T}$.

Let $(x,x^*)\in G(\overline{S}||_A \overline{T}).$ Then $x\in \overline S^{-1}(x^*)+(A^*\overline TA^{**})^{-1}(x^*),$ hence there exists $u^{**}\in \overline S^{-1}(x^*)$ and $w^{**}\in (A^*\overline TA^{**})^{-1}(x^*)$ such that $x=u^{**}+w^{**}$. Thus $(u^{**},x^*) \in G(\overline S)$ and, as $x^*\in A^*\overline TA^{**}(w^{**})$,  there exists $v^*\in \overline T(A^{**}w^{**})$ such that $A^*v^*=x^*$. Consequently, $h_S^*(x^*,u^{**}) = \langle u^{**}, x^*\rangle$ and $h_T^*(v^*,A^{**}w^{**}) =  \langle A^{**}w^{**}, v^*\rangle$ and, so,
$$h^*(x^*,x)= (h_S^*\bigcirc_2^A h_T^*)(x^*,x) \le h_S^*(x^*,u^{**})+h_T^*(v^*,A^{**}w^{**}) = \langle u^{**}, x^*\rangle + \langle w^{**}, x^*\rangle = \langle x^*,x\rangle.$$
On the other hand, as shown above, $h^*(x^*,x)\ge \<x^*,x\>$ for all $(x,x^*)\in X\times X^*,$ hence $h^*(x^*,x)=\<x^*,x\>$, implying that $G(\overline{S}||_A \overline{T}) \subseteq\{(x,x^*) \in X \times X^*:h(x,x^*)=\<x^*,x\>\}$.

Conversely, let be $(x,x^*) \in X \times X^*$ such that $h^*(x^*,x)=\<x^*,x\>$. Using that $h_S^*\bigcirc_2^A h_T^*$ is exact at $(x^*,x)$, there exists $(\overline u^{**}, \overline w^{**}, \overline v^*)\in X^{**}\times X^{**}\times Y^*$ such that $\overline u^{**}+\overline w^{**}=x, A^*\overline{v}^*=x^*$ and $\<x^*,x\> = h(x,x^*) = h_S^*(x^*,\overline u^{**})+h_T^*(\overline{v}^*,A^{**}\overline w^{**})$. Since, on the other hand, $h_S^*(x^*,\overline u^{**})+h_T^*(\overline{v}^*,A^{**}\overline w^{**}) \geq \<\overline u^{**}, x^*\> +  \<A^{**}\overline w^{**}, \overline v^*\> = \<x^*,x\>$, it follows that $h_S^*(x^*,\overline u^{**}) = \<\overline u^{**}, x^*\>$ and $h_T^*(\overline{v}^*,A^{**}\overline w^{**}) = \<A^{**}\overline w^{**}, \overline v^*\>$.

But ${h_S^*}$ and ${h_T^*}$ are representative functions of $\overline S^{-1}$ and $\overline T^{-1}$, respectively, which means that $(\overline u^{**},x^*)\in G(\overline S)$ and $(A^{**}\overline w^{**},\overline{v}^*)\in G(\overline T)$. We have $\overline u^{**} \in \overline S^{-1}(x^*)$ and, since $\overline w^{**} =x-\overline u^{**}$, we obtain $\overline{v}^*\in \overline TA^{**}(x-\overline u^{**})$, hence $x^*=A^*\overline{v}^*\in A^*\overline TA^{**}(x-\overline u^{**})$ or, equivalently, $x-\overline u^{**}\in (A^*\overline TA^{**})^{-1}(x^*).$ Thus $x=\overline{u^{**}}+(x-\overline u^{**})\in (\overline S^{-1}+(A^*\overline TA^{**})^{-1})(x^*)$ and so $(x,x^*)\in G(\overline{S}||_A \overline{T})$.

Hence, $G(\overline{S}||_A \overline{T}) = \{(x,x^*) \in X \times X^*: h^*(x^*,x) = \langle x^*,x\rangle\}$ and this concludes the proof.
\end{proof}

Under the additional assumption that the domain of Gossez's closure of $S$ is a subset of $X$, the conditions (a) and (b) of the previous theorem become sufficient for the maximal monotonicity of Gossez type (D) of the generalized parallel sum $S ||_A T$. One can notice that $D(\overline S) \subseteq X$ is particulary fulfilled when $X$ is a \textit{reflexive Banach space}.

\begin{thm}\label{t3.2} Let $S:X\rightrightarrows X^*$ and $T:Y\rightrightarrows Y^*$ be two maximal monotone operators of Gossez type (D) with strong representative functions $h_S$ and $h_T$, respectively, and $A:X\To Y$ a linear continuous mapping such that $\dom h_T \times \pr_{X^*}(\dom h_S) \cap \im A \times\Delta_{Y^*}^{A^*}\neq\emptyset$ and $D(\overline S) \subseteq X$. Assume that one of the following conditions is fulfilled:
\begin{enumerate}
\item[(a)] $(0,0,0)\in {^{\ic}( \dom h_T \times \pr_{X^*}(\dom h_S)-\im A \times\Delta_{Y^*}^{A^*})}$;
\item[(b)] the set $\{(u^*,A^*v^*,A^{**}u^{**}+v^{**},r): r \in \Real, h_S^*(u^*,u^{**})+h_T^*(v^*,v^{**})\le r\}$ is closed regarding $\Delta_{X^*}\times \im A^{**}\times \Real$ in  $(X^*,w^*)\times (X^*,w^*)\times (Y^{**},w^*)\times \Real$.
\end{enumerate}
Then the function $h:X\times X^*\To\oReal,\, h(x,x^*)= \cl\nolimits_{\|\cdot\|\times\|\cdot\|_*}(h_S\bigcirc_1^A h_T)(x,x^*)$, is a strong representative function of $S ||_A T$ and the generalized parallel sum $S ||_A T$ is a maximal monotone operator of Gossez type (D).
\end{thm}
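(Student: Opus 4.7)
The plan is to reduce Theorem \ref{t3.2} to Theorem \ref{t3.1} by showing that the extra assumption $D(\overline{S}) \subseteq X$ forces $\overline{S}||_A\overline{T}$ and $S||_A T$ to coincide as multivalued operators. Once the set equality of their graphs is established, the strong-representative property of $h$ and the maximal monotonicity of Gossez type (D) transfer verbatim from the extended parallel sum to the ordinary one.

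The first step is simply to invoke Theorem \ref{t3.1} under the stated regularity condition (either (a) or (b)) to obtain that the function $h = \cl_{\|\cdot\|\times\|\cdot\|_*}(h_S\bigcirc_1^A h_T)$ is a strong representative function of $\overline{S}||_A\overline{T}$ and that $\overline{S}||_A\overline{T}$ is maximal monotone of Gossez type (D).

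The second and central step is to verify the set equality $G(\overline{S}||_A\overline{T}) = G(S||_A T)$. The inclusion $G(S||_A T) \subseteq G(\overline{S}||_A\overline{T})$ is automatic from $G(S) \subseteq G(\overline{S})$, $G(T) \subseteq G(\overline{T})$, and the fact that $A^{**}$ extends $A$ through the canonical injections. For the nontrivial inclusion, given $(x,x^*) \in G(\overline{S}||_A\overline{T})$, I would decompose $x = u^{**} + w^{**}$ with $u^{**} \in \overline{S}^{-1}(x^*)$ and $w^{**} \in (A^*\overline{T}A^{**})^{-1}(x^*)$. The hypothesis $D(\overline{S}) \subseteq X$ places $u^{**}$ in $X$, and consequently $w^{**} = x - u^{**}$ lies in $X$ as well. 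Using that a maximal monotone operator of Gossez type (D) admits a unique maximal monotone extension to the bidual, so that $\overline{S}$ restricted to $X$ equals $S$ and $\overline{T}$ restricted to $Y$ equals $T$, the corresponding elements descend: $(u^{**},x^*)\in G(S)$ and, pulling back the witness $v^* \in \overline{T}(A^{**}w^{**})$ with $A^*v^* = x^*$ that certifies $w^{**} \in (A^*\overline{T}A^{**})^{-1}(x^*)$, one obtains $v^* \in T(Aw^{**})$. This produces precisely the decomposition witnessing $(x,x^*) \in G(S||_A T)$.

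The main delicate point I expect to attend to carefully is the order in which the domain hypothesis is used: it is the first summand $u^{**}$ that is directly controlled by $D(\overline{S}) \subseteq X$, and only after placing $u^{**}$ in $X$ can one conclude that $w^{**} = x - u^{**}$ also lies in $X$; without this observation one could not infer $A^{**}w^{**} = Aw^{**} \in Y$, which is precisely what allows the use of $\overline{T}|_Y = T$ to drop the bidual version of $\overline{T}$ and obtain an honest element $v^* \in T(Aw^{**})$. Once this identification is in place, the theorem follows immediately from Theorem \ref{t3.1}.
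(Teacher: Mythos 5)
Your proposal is correct and follows the paper's own route: the paper likewise proves Theorem \ref{t3.2} by showing $\overline{S}||_A\overline{T}=S||_A T$ under $D(\overline S)\subseteq X$ (first placing $u^{**}\in\overline S^{-1}(x^*)\subseteq X$, hence $w^{**}=x-u^{**}\in X$, then using $A^{**}w^{**}=Aw^{**}$ and $\overline T(Aw^{**})=T(Aw^{**})$) and then invoking Theorem \ref{t3.1}. Your careful remark about the order in which the domain hypothesis is used matches exactly the paper's argument.
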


\begin{proof} We need only to show that $\overline S ||_A \overline T=S ||_A T$, whenever $D(\overline S) \subseteq X$. Indeed,
$(x,x^*) \in G(\overline S ||_A \overline T)$ if and only if there exist $u^{**}\in \overline S^{-1}(x^*) \subseteq X$ and $w^{**}\in (A^*\overline TA^{**})^{-1}(x^*)$ such that $x=u^{**}+w^{**}$. This is further equivalent to the existence of $u^{**}$ and $w^{**}$ in $X$ such that $(u^{**},x^*) \in G(S)$, $x^*\in A^*\overline TA^{**}(w^{**}) =  A^*\overline T(Aw^{**}) =  A^*T(Aw^{**})$ and $x=u^{**}+w^{**}$. But this is the same with $x \in S^{-1}(x^*) + (A^*TA)^{-1}(x^*)$ or, equivalently, $(x,x^*) \in G(S ||_AT)$.
\end{proof}

\begin{rmk}\label{relcond}\rm
Concerning the two sufficient conditions for maximal monotonicity considered in Theorem \ref{t3.1} and Theorem \ref{t3.2}, one can notice, according to Theorem \ref{t2.5}, that condition (b) is fulfilled whenever condition (a) is fulfilled. In the last section of the paper we provide a situation where the latter fails, while condition (b) is valid (see Example \ref{ex5.1}).
\end{rmk}

In the last part of this section we turn our attention to the formulation of further interiority-type regularity conditions for the maximal monotonicity of Gossez type (D) of the generalized parallel sums $\overline{S}||_A\overline{ T}$, respectively, $S||_A T$, this time expressed by means of the \textit{graph} of $T$ and of the \textit{range} of $S$. We start with the following result.

\begin{thm}\label{t4.1} Let $S:X\rightrightarrows X^*$ and $T:Y\rightrightarrows Y^*$ be two maximal monotone operators of Gossez type (D) with strong representative functions $h_S$ and $h_T$, respectively, and $A:X\To Y$ a linear continuous mapping such that $\dom h_T \times \pr_{X^*}(\dom h_S) \cap \im A \times\Delta_{Y^*}^{A^*}\neq\emptyset$. Then it holds:
$${^{\ic}\left(G(T)\times R(S)-\im A \times \Delta_{Y^*}^{A^*}\right)} \subseteq {^{\ic}\left(\co G(T)\times \co R(S)-\im A \times \Delta_{Y^*}^{A^*}\right)} \subseteq $$
$${^{\ic}\left(\dom h_T \times \pr\nolimits_{X^*}(\dom h_S)-\im A\times \Delta_{Y^*}^{A^*}\right)} = \ri\left(\dom h_T \times \pr\nolimits_{X^*}(\dom h_S)-\im A\times \Delta_{Y^*}^{A^*}\right).$$
\end{thm}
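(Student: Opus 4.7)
The rightmost equality is immediate from Remark~\ref{r2.4} applied with $f := h_S$ and $g := h_T$: the Banach (hence Fr\'echet) assumption on $X,Y$ and the feasibility hypothesis $\dom h_T \times \pr\nolimits_{X^*}(\dom h_S) \cap \im A \times \Delta_{Y^*}^{A^*} \neq \emptyset$ are both in force, so the cited remark directly yields the equality. The remaining work is in the two inclusions.

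For both inclusions I plan to invoke the following elementary monotonicity principle: \emph{if $D_1 \subseteq D_2$ are arbitrary subsets of some locally convex space with $\aff D_1 = \aff D_2$, then ${^{\ic}D_1} \subseteq {^{\ic}D_2}$}. Indeed, for $d \in {^{\ic}D_1}$ the affine hull $\aff D_1 (= \aff D_2)$ is closed and $d \in \icr D_1$; since $\aff(D_1 - D_1) = \aff(D_2 - D_2)$ (both being the linear subspace parallel to the common affine hull), any direction $x$ in this subspace yields by definition some $\delta > 0$ with $d + \lambda x \in D_1 \subseteq D_2$ for $\lambda \in [0,\delta]$, so $d \in \icr D_2$ and hence $d \in {^{\ic}D_2}$.

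Introduce $L := \im A \times \Delta_{Y^*}^{A^*}$ (a linear subspace) and write $U := G(T)\times R(S) - L$, $V := \co G(T) \times \co R(S) - L$, $W := \dom h_T \times \pr\nolimits_{X^*}(\dom h_S) - L$. Since $h_T(y,y^*) = \<y^*,y\>$ is finite for every $(y,y^*)\in G(T)$ (the representative-function identity), we have $G(T) \subseteq \dom h_T$, and analogously $R(S) \subseteq \pr\nolimits_{X^*}(\dom h_S)$; hence $U \subseteq V \subseteq W$. For the \emph{first inclusion} I will verify $\aff U = \aff V$ via the three standard identities $\aff \co D = \aff D$, $\aff(C \times D) = \aff C \times \aff D$ for nonempty $C,D$, and $\aff(E - L) = \aff E + L$ for $L$ a linear subspace; both affine hulls collapse to $\aff G(T) \times \aff R(S) + L$, and the monotonicity principle gives $^{\ic}U \subseteq {^{\ic}V}$.

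For the \emph{second inclusion} the same scheme works once $\aff V = \aff W$ is available, i.e., once the twin equalities $\aff \dom h_T = \aff G(T)$ and $\aff \pr\nolimits_{X^*}(\dom h_S) = \aff R(S)$ are established (the $\supseteq$ directions being trivial from $G(T) \subseteq \dom h_T$). This is the main obstacle: it must be extracted from properties specific to strong representative functions of maximal monotone operators of Gossez type (D). Concretely, I plan to use the sandwich $\varphi_T \le h_T \le \psi_T$ from Proposition~\ref{frepr}(i), Fitzpatrick's identity on $G(T)$ from Lemma~\ref{fitz}(ii), and the characterization of type (D) via strong conjugate bounds in Theorem~\ref{th-gos} to argue that $\dom h_T$ sits inside the closed affine hull of $\co G(T)$ (and likewise for the range component via $h_S$). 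Once these structural affine-hull equalities are secured, the monotonicity principle delivers $^{\ic}V \subseteq {^{\ic}W}$ and the chain is complete.
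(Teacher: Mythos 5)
Your handling of the routine parts is fine: the first inclusion via the affine-hull monotonicity principle (with $\aff(\co E)=\aff E$) and the rightmost equality via Remark~\ref{r2.4} are exactly what the paper does. The problem is the second inclusion, which is the entire mathematical content of the theorem, and your proposal stops at a declaration of intent precisely there. The sandwich $\varphi_T\le h_T\le\psi_T$ only gives $\dom h_T\subseteq\dom\varphi_T$ (and $\pr_{X^*}(\dom h_S)\subseteq\pr_{X^*}(\dom\varphi_S)$), so it merely shifts the problem to showing that $\dom\varphi_T\times\pr_{X^*}(\dom\varphi_S)$ lies in the closed affine hull of $G(T)\times R(S)-\im A\times\Delta_{Y^*}^{A^*}$; neither Lemma~\ref{fitz} nor Theorem~\ref{th-gos} yields this. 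Moreover, the exact equalities you aim for, $\aff\dom h_T=\aff G(T)$ and $\aff\pr_{X^*}(\dom h_S)=\aff R(S)$, are stronger than what is needed and stronger than what can be established; only containment in the \emph{closed} affine hull is available (which, combined with the closedness of $\aff$ built into ${^{\ic}}$, does suffice, as the paper notes).

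The missing mechanism in the paper is a strong separation argument plus a nontrivial external result. One assumes without loss of generality $(0,0)\in G(S)$, $(0,0)\in G(T)$, supposes a point of $\dom\varphi_T\times\pr_{X^*}(\dom\varphi_S)$ lies outside $\cl(\aff D)$ with $D:=G(T)\times R(S)-\im A\times\Delta_{Y^*}^{A^*}$, and separates it by some $(q^*,q^{**},p^{**})$ annihilating $\aff D$. The subtracted subspace $\im A\times\Delta_{Y^*}^{A^*}$ then forces $q^{**}=-A^{**}p^{**}$ and $A^*q^*=0$, hence the crucial orthogonality $\langle q^{**},q^*\rangle=0$; the Gossez type (D) property (bounded nets in $G(T)$ converging $w^*\times\|\cdot\|_*$ to points of $G(\overline T)$, and similarly for $S$) transfers the annihilation from $G(T)$ and $R(S)$ to $G(\overline T)$ and $R(\overline S)$; and only then Simons' Lemma 20.4(b) of \cite{Sim} upgrades annihilation of the graph (respectively range) to annihilation of $\dom\varphi_{\overline T}$ (respectively $\dom\varphi_{\overline S}$), which, via $\varphi_{\overline T}|_{Y\times Y^*}=\varphi_T$ and $\varphi_{\overline S}|_{X\times X^*}=\varphi_S$, contradicts the separation. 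Note also that the orthogonality hypothesis of Simons' lemma is obtained from the \emph{joint} treatment of the triple, not from componentwise affine-hull comparisons as in your plan. Without this (or an equivalent substitute for Simons' lemma), your argument does not close.
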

\begin{proof}

Let us denote by $C:=\dom h_T \times \pr_{X^*}(\dom h_S) - \im A\times \Delta_{Y^*}^{A^*}$ and by $D:=G(T)\times R(S)-\im A\times \Delta_{Y^*}^{A^*}$. Then $\co D =  \co G(T)\times \co R(S)-\im A \times \Delta_{Y^*}^{A^*}$ and, obviously, ${^{\ic} D} \subseteq {^{\ic} (\co D)}$. On the other hand, as pointed out in Remark \ref{r2.4}, we have ${^{\ic} C} = \ri C$. Thus, it remains to show that ${^{\ic} (\co D)}\subseteq {^{\ic} C}$.

Since, $\co D \subseteq C$, one has $\aff (\co D) = \aff D \subseteq \aff C$. Thus, in order to prove that ${^{\ic} (\co D)}\subseteq {^{\ic} C} $, it is enough to show that that $\aff C \subseteq \cl(\aff D)$. The proof will rely on \cite[Lemma 20.4(b)]{Sim} (for another result, where this lemma found application we refer to \cite{LYao}). What we will actually prove, is that
\begin{equation}\label{int4.1}
\dom \varphi_T\times \pr\nolimits_{X^*}(\dom \varphi_S)\subseteq \cl(\aff D),
\end{equation}
where $\varphi_S$ and $\varphi_T$ denote the Fitzpatrick functions of the operators $S$ and $T$, respectively. If \eqref{int4.1} is true, then one gets
$$C \subseteq \dom \varphi_T\times \pr\nolimits_{X^*}(\dom \varphi_S)  - \im  A\times\Delta_{Y^*}^{A^*} \subseteq \cl\left(\aff D - \im  A\times\Delta_{Y^*}^{A^*}\right) = \cl(\aff D),$$
which leads to the desired conclusion.

In order to show \eqref{int4.1}, we assume without loss of generality that $(0,0)\in G(S)$ and $(0,0)\in G(T)$. Suppose that there exists $(v,v^*,u^*)\in \dom \varphi_T \times \pr_{X^*}(\dom \varphi_S )$ such that $(v,v^*,u^*)\not\in\cl(\aff D)$. Then, according to a strong separation theorem, there exist $\d\in\Real$ and $(q^*,q^{**},p^{**})\in Y^*\times Y^{**}\times X^{**}$ such that $$\<(q^*,q^{**},p^{**}),(v,v^*,u^*)\>> \d> \sup\{\<(q^*,q^{**},p^{**}),(y,y^*,x^*)\>:(y,y^*,x^*)\in \cl(\aff D)\}.$$

As $0 \in D$, $\aff D$ is a linear subspace. Thus $\<(q^*,q^{**},p^{**}),(y,y^*,x^*)\>=0$ for all $(y,y^*,x^*)\in \aff D$ and, consequently, $\d > 0$. In other words,
\begin{equation}\label{int24.1}
\<(q^*,q^{**},p^{**}),(y-Au,y^*-v^*,x^*-A^*v^*)\>=0 \ \forall (y,y^*)\in G(T) \ \forall x^*\in R(S) \ \forall u\in X \ \forall v^*\in Y^*.
\end{equation}
By taking $(y,y^*,x^*):=(0,0,0) \in G(T) \times R(S)$, we obtain
$$q^{**}=-A^{**}p^{**}$$
and from here it results that
$$\<q^*,Au\>=\<A^*q^*,u\>=0 \ \forall u\in X,$$
which means that $A^*q^*=0$. Hence,
$$\<q^{**},q^*\>= \<-A^{**}p^{**},q^*\>= \<-p^{**},A^*q^*\>=0.$$
On the other hand, from \eqref{int24.1}, we have  $\<(q^*,q^{**},p^{**}),(y,y^*,x^*)\>=0$ for all $(y,y^*)\in G(T)$ and all $x^*\in R(S)$, hence
$$\<(q^*,q^{**}),(y,y^*)\>=0 \ \forall (y,y^*)\in G(T)$$
and
$$\<p^{**}, x^*\>=0 \ \forall x^* \in R(S).$$

Take now an arbitrary $(y^{**},y^*)\in G(\overline{T}).$ Then there exists $(y_\a,y^*_\a)_{\a\in\mathfrak{I}} \in G(T)$ such that $(y_\a)_{\a\in\mathfrak{I}}$ converges to $y^{**}$ in the weak$^*$ topology of $Y^{**}$ and $(y^*_\a)_{\a\in\mathfrak{I}}$ converges to $y^*$ in the strong topology of $Y^*$. Since $(y_\a,y^*_\a)\in G(T),$ we have $\<(q^*,q^{**}),(y_\a,y_\a^*)\>=0$ for every $\a\in\mathfrak{I}$, hence $\<(q^*,q^{**}),(y^{**},y^*)\>=0$. Consequently,
$$\<(q^*,q^{**}),(y^{**},y^*)\>=0 \ \forall (y^{**},y^*)\in G(\overline{T})$$ and one can prove in a similar way that
$$\<p^{**}, x^*\>=0 \ \forall x^* \in R(\overline{S}).$$

From here, according to \cite[Lemma 20.4(b)]{Sim}, one has (as $\<q^{**},q^*\>=0$)
$$\<(q^*,q^{**}),(y^{**},y^*)\>=0 \ \forall (y^{**},y^*)\in \dom \varphi_{\overline{T}}$$ and (as, obviously, $\<p^{**},0\>=0$)
$$\<(0,p^{**}), (x^{**},x^*)\>=0 \ \forall (x^*,x^{**}) \in \dom \varphi_{\overline{S}}.$$
But $(v,v^*,u^*)\in \dom \varphi_T \times \pr_{X^*}(\dom \varphi_S )$ and, as $\varphi_{\overline{S}}|_{X \times X^*} = \varphi_S$ and $\varphi_{\overline{T}}|_{Y \times Y^*} = \varphi_T$, it follows that
$$\<(q^*,q^{**},p^{**}),(v,v^*,u^*)\>=0,$$
which is a contradiction to $\d > 0$. Consequently, \eqref{int4.1} is valid and, so, ${^{\ic}(\co D)} \subseteq {^{\ic}(C)}.$
\end{proof}

The above theorem gives rise to two supplementary interiority-type regularity conditions for the maximal monotonicity of $\overline{S}||_A\overline{T}$.

\begin{cor}\label{c4.1} Let $S:X\rightrightarrows X^*$ and $T:Y\rightrightarrows Y^*$ be two maximal monotone operators of Gossez type (D) with strong representative functions $h_S$ and $h_T$, respectively, and $A:X\To Y$ a linear continuous mapping such that $\dom h_T \times \pr_{X^*}(\dom h_S) \cap \im A \times\Delta_{Y^*}^{A^*}\neq\emptyset$. If $$(0,0,0)\in {^{\ic}\left(G(T)\times R(S)-\im A\times \Delta_{Y^*}^{A^*}\right)}$$
or
$$(0,0,0)\in {^{\ic}\left(\co G(T) \times \co R(S) - \im A\times \Delta_{Y^*}^{A^*}\right)},$$
then the extended generalized parallel sum $\overline{S}||_A \overline{T}$ is a maximal monotone operator of Gossez type (D).
\end{cor}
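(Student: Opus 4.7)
The plan is straightforward: both geometric hypotheses offered in the corollary are, via Theorem \ref{t4.1}, subsumed by the interiority-type regularity condition (a) of Theorem \ref{t3.1}, so the conclusion follows by a direct application of Theorem \ref{t3.1} to $\overline{S}||_A \overline{T}$. No new analytic or geometric ingredient is needed; the corollary is essentially the packaging of Theorems \ref{t3.1} and \ref{t4.1}.

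More precisely, I would first invoke the inclusion chain established in Theorem \ref{t4.1}, namely
\begin{equation*}
{^{\ic}\bigl(G(T)\times R(S)-\im A\times \Delta_{Y^*}^{A^*}\bigr)} \subseteq {^{\ic}\bigl(\co G(T)\times \co R(S)-\im A\times \Delta_{Y^*}^{A^*}\bigr)} \subseteq {^{\ic}\bigl(\dom h_T \times \pr\nolimits_{X^*}(\dom h_S)-\im A\times \Delta_{Y^*}^{A^*}\bigr)}.
\end{equation*}
If $(0,0,0)$ lies in the intrinsic relative algebraic interior of either the first or the second set in this chain, then the chain places it in the intrinsic relative algebraic interior of the third, which is precisely condition (a) of Theorem \ref{t3.1}.

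Second, I would verify that the remaining prerequisites of Theorem \ref{t3.1} are inherited verbatim from the corollary's hypotheses: $S$ and $T$ are assumed maximal monotone of Gossez type (D) with strong representative functions $h_S$ and $h_T$, the mapping $A:X\To Y$ is linear and continuous, and the feasibility condition $\dom h_T \times \pr\nolimits_{X^*}(\dom h_S) \cap \im A\times \Delta_{Y^*}^{A^*} \neq \emptyset$ is explicitly assumed. Hence Theorem \ref{t3.1} applies and yields that $\overline{S}||_A \overline{T}$ is maximal monotone of Gossez type (D).

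The main obstacle is not in this argument at all — it has already been absorbed into the proof of Theorem \ref{t4.1}, where the nontrivial reduction $\aff C \subseteq \cl(\aff D)$ is extracted from Simons' lemma together with the identification $\varphi_{\overline{S}}|_{X\times X^*} = \varphi_S$ and $\varphi_{\overline{T}}|_{Y\times Y^*} = \varphi_T$, and into Theorem \ref{t3.1}, which in turn rests on the bivariate infimal convolution machinery of Section 2. Once those are in hand, the corollary is essentially a bookkeeping step.
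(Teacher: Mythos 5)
Your proposal is correct and matches the paper's own route: the corollary is stated there as an immediate consequence of the inclusion chain in Theorem \ref{t4.1}, which upgrades either hypothesis to condition (a) of Theorem \ref{t3.1}, whose remaining assumptions are directly carried over. Nothing further is needed.
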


As follows from the following result, under the supplementary assumption that $D(\overline S) \subseteq X$, the inclusion relations in Theorem \ref{t4.1} become equalities.

\begin{thm}\label{t4.2}  Let $S:X\rightrightarrows X^*$ and $T:Y\rightrightarrows Y^*$ be two maximal monotone operators of Gossez type (D) with strong representative functions $h_S$ and $h_T$, respectively, and $A:X\To Y$ a linear continuous mapping such that $\dom h_T \times \pr_{X^*}(\dom h_S) \cap \im A \times\Delta_{Y^*}^{A^*}\neq\emptyset$ and $D(\overline S) \subseteq X$. Then it holds:
$${^{\ic}\left(G(T)\times R(S)-\im A \times \Delta_{Y^*}^{A^*}\right)}  = \ri\left(G(T)\times R(S)-\im A \times \Delta_{Y^*}^{A^*}\right) =$$
$${^{\ic}\left(\co G(T)\times \co R(S)-\im A \times \Delta_{Y^*}^{A^*}\right)} = \ri \left(\co G(T)\times \co R(S)-\im A \times \Delta_{Y^*}^{A^*}\right)=$$
$${^{\ic}\left(\dom h_T \times \pr\nolimits_{X^*}(\dom h_S)-\im A\times \Delta_{Y^*}^{A^*}\right)} = \ri\left(\dom h_T \times \pr\nolimits_{X^*}(\dom h_S)-\im A\times \Delta_{Y^*}^{A^*}\right).$$
\end{thm}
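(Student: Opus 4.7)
The plan is to refine the argument of Theorem~\ref{t4.1} under the additional hypothesis $D(\overline{S})\subseteq X$. Abbreviate $D_1:=G(T)\times R(S)-\im A \times \Delta_{Y^*}^{A^*}$, $D_2:=\co G(T)\times \co R(S)-\im A \times \Delta_{Y^*}^{A^*}$ and $D_3:=\dom h_T \times \pr\nolimits_{X^*}(\dom h_S)-\im A\times \Delta_{Y^*}^{A^*}$, so that $D_1\subseteq D_2 \subseteq D_3$ and Theorem~\ref{t4.1} already delivers ${^{\ic}D_1}\subseteq {^{\ic}D_2}\subseteq {^{\ic}D_3}=\ri D_3$. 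Thus the remaining content of Theorem~\ref{t4.2} amounts to proving the reverse inclusion ${^{\ic}D_3}\subseteq {^{\ic}D_1}$ together with the two identities ${^{\ic}D_i}=\ri D_i$ for $i=1,2$.

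First I would normalize so that $(0,0)\in G(S)$ and $(0,0)\in G(T)$, putting $(0,0,0)\in D_1$. The target of the first two steps is to show that $\aff D_1=\aff D_3$ is a \emph{closed} linear subspace; the intermediate $\aff D_2$ then automatically coincides with both. The inclusion $\aff D_1\subseteq \aff D_3$ is immediate from $D_1\subseteq D_3$, and the closedness of $\aff D_3$ follows from ${^{\ic}D_3}=\ri D_3$ (Theorem~\ref{t4.1} combined with Remark~\ref{r2.4}). So only the reverse inclusion $\aff D_3\subseteq \aff D_1$ is nontrivial.

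Second, I would revisit the separation-by-contradiction step from the proof of Theorem~\ref{t4.1}, which produced a triple $(q^*,q^{**},p^{**})\in Y^*\times Y^{**}\times X^{**}$ separating an arbitrary $(v,v^*,u^*)\in \dom\varphi_T\times \pr\nolimits_{X^*}(\dom\varphi_S)$ from $\cl(\aff D_1)$ and eventually satisfying $\<(q^*,q^{**}),(y^{**},y^*)\>=0$ for every $(y^{**},y^*)\in\dom\varphi_{\overline T}$ and $\<p^{**},x^*\>=0$ for every $x^*\in\pr\nolimits_{X^*}(\dom\varphi_{\overline S})$. The hypothesis $D(\overline S)\subseteq X$ means that Gossez's monotone closure $\overline S$ is already supported on $X$ and hence, as in the proof of Theorem~\ref{t3.2}, coincides with $S$ after the canonical identification $X\subseteq X^{**}$. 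Using this together with a careful application of Simons' Lemma~20.4(b) of~\cite{Sim}, one can arrange the separating functional to lie in $X$ rather than merely in $X^{**}$, so that the contradiction is reached \emph{without} passing to a closure: the inclusion $\dom\varphi_T\times \pr\nolimits_{X^*}(\dom\varphi_S)\subseteq \aff D_1$ then holds directly. Combined with $\dom h_T\subseteq\dom\varphi_T$ and $\dom h_S\subseteq\dom\varphi_S$, this yields $\aff D_3\subseteq\aff D_1$ and completes the affine-hull equality.

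Finally, once the three sets share a common closed affine hull, the identities ${^{\ic}D_i}=\ri D_i$ follow from known Fréchet-space results: for $D_3$ it is Theorem~\ref{t4.1}; for the convex set $D_2$ it comes from \cite[Proposition~2.7.2]{Zal-carte} applied in the Fréchet space $Y\times Y^*\times X^*$; and for the non-convex $D_1$ one exploits that every point of $\ri D_3$ can be represented as an element of $G(T)\times R(S)$ modulo $\im A\times \Delta_{Y^*}^{A^*}$, thanks to the coincidence of affine hulls and the ``affine density'' of $R(S)$ (resp.\ $G(T)$) in $\pr\nolimits_{X^*}(\dom h_S)$ (resp.\ $\dom h_T$) established in Step~2. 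This simultaneously provides the reverse inclusion ${^{\ic}D_3}\subseteq{^{\ic}D_1}$ and closes the chain of equalities. The hard part of the argument is clearly Step~2: rigorously justifying that $D(\overline S)\subseteq X$ suffices to strip the closure from $\cl(\aff D_1)$ in the final contradiction of Theorem~\ref{t4.1}; this hinges on a delicate combination of Simons' Lemma~20.4(b), the uniqueness of the maximal monotone extension of $S$ to the bidual, and the identification $X\subseteq X^{**}$.
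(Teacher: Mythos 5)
There is a genuine gap, and it sits exactly at the heart of the theorem. The essential content of Theorem \ref{t4.2} is the elementwise inclusion ${^{\ic}D_3}\subseteq D_1$: every point of the relative interior of the convex set $\dom h_T\times\pr\nolimits_{X^*}(\dom h_S)-\im A\times\Delta_{Y^*}^{A^*}$ must actually \emph{belong} to the non-convex set $G(T)\times R(S)-\im A\times\Delta_{Y^*}^{A^*}$; once this is known, the whole chain of equalities follows by sandwiching ${^{\ic}D_3}\subseteq D_1\subseteq D_2\subseteq D_3$ and combining with Theorem \ref{t4.1} (plus a separate look at the empty case). Your proposal replaces this by the weaker assertions $\aff D_1=\aff D_3$ and an appeal to ``affine density'', but neither implies the needed inclusion: two nested sets can share the same closed affine hull and still have completely different intrinsic cores --- take $D_3=[0,1]$ and $D_1=[0,1]\cap\Q$ in $\Real$, where ${^{\ic}D_3}=(0,1)$ while ${^{\ic}D_1}=\emptyset$. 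Density of $R(S)$ (resp.\ $G(T)$) in an affine hull or its closure never yields exact membership of a prescribed point of $\ri D_3$ in $D_1$, so your Step 3 does not deliver ${^{\ic}D_3}\subseteq{^{\ic}D_1}$. Your Step 2 is also unsound on its own terms: a separation argument can only certify membership in a \emph{closed} convex set, since a point of $\cl(\aff D_1)\setminus\aff D_1$ cannot be strictly separated from $\aff D_1$; no strengthening of the hypotheses lets you ``strip the closure'' by separation, and in any case an affine-hull statement is too weak for the theorem. (Likewise, ${^{\ic}D_2}=\ri D_2$ is not obtained by applying \cite[Proposition 2.7.2]{Zal-carte} directly to $D_2$, which is not of the required form; in the paper these identities fall out of the sandwich.)

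What is missing from your plan is the mechanism the paper actually uses: a translation argument that feeds the interiority condition back into the maximality theory. For $(v,v^*,u^*)\in{^{\ic}D_3}$ one translates the operators to $\widetilde S$, $\widetilde T$ with $G(\widetilde S)=G(S)-(0,u^*)$ and $G(\widetilde T)=G(T)-(v,v^*)$, notes that suitable translates of $h_S,h_T$ are strong representative functions of these operators and that $(0,0,0)\in{^{\ic}(D_3-(v,v^*,u^*))}$, and then invokes Theorem \ref{t3.2} --- this is precisely where $D(\overline S)\subseteq X$ enters, since it passes to $\widetilde S$ --- to conclude that $\widetilde S||_A\widetilde T$ is maximal monotone and hence has nonempty graph. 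Unwinding one element of that graph exhibits $(v,v^*,u^*)$ as an element of $G(T)\times R(S)-\im A\times\Delta_{Y^*}^{A^*}$, i.e.\ ${^{\ic}D_3}\subseteq D_1$. Without an argument of this kind (some maximality or surjectivity input producing actual points of $G(T)\times R(S)$), the affine-hull and density considerations you propose cannot close the proof.
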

\begin{proof} By keeping the notations introduced in the proof of Theorem \ref{t4.1}, let us prove first that ${^{\ic} C} \subseteq D$. Take an arbitrary $(v,v^*,u^*)\in {^{\ic} C}$, hence $(0,0,0)\in {^{\ic} (C-(v,v^*,u^*))}$. Consider the functions $$\tilde f: X\times X^*\To\oReal,\,\tilde{f}(x,x^*)=h_S(x,x^*+u^*)-\<u^*,x\>$$ and
$$\tilde{g}:Y\times Y^*\To\oReal,\, \tilde{g}(y,y^*)=h_T(y+v,y^*+v^*)-(\<v^*,y\>+\<y^*,v\>+\<v^*,v\>)$$
and the operators $\widetilde{S}:X\rightrightarrows X^*$ defined by $G(\widetilde{S})=\{(x,x^*)\in X\times X^*:\tilde{f}(x,x^*)=\<x^*,x\>\}$ and $\widetilde{T}:Y\rightrightarrows Y^*$ defined by $G(\widetilde{T})=\{(y,y^*)\in Y\times Y^*:\tilde{g}(y,y^*)=\<y^*,y\>\}.$ It can be easily observed, that $G(\widetilde{S})=G(S)-(0,u^*)$ and $G(\widetilde{T})=G(T)-(v,v^*)$. Consequently, $\widetilde{S}$ and $\widetilde{T}$ are maximal monotone operators of Gossez type (D) and $\tilde{f},$ respectively, $\tilde{g}$ are strong representative functions for them. Since $D(\overline S) \subseteq X$, the domain of Gossez's closure of $\widetilde{S}$ is a subset of $X$, too. Hence, according to Theorem \ref{t3.2}, the condition
$$(0,0,0)\in {^{\ic}(C-(v,v^*,u^*))} = {^{\ic}\left(\dom \tilde{g} \times \pr\nolimits_{X^*}(\dom \tilde{f} )-\im A\times \Delta_{Y^*}^{A^*}\right)}$$
ensures the maximal monotonicity of $\widetilde{S}||_A \widetilde{T}$. Hence, $G(\widetilde{S}||_A \widetilde{T})\neq\emptyset,$ thus there exists $x^*\in (\widetilde{S}^{-1}+(A^*\widetilde{T}A)^{-1})^{-1}(x)$ for some $x\in X.$ This means that there exist $u, w \in X$ such that $(u,x^*)\in G(\widetilde{S})$ and $(w,x^*)\in G(A^*\widetilde{T}A)$ and $u+w=x$. As $G(\widetilde{S})=G(S)-(0,u^*)$, we have
$$(0,u^*)\in G(S)-(u,x^*).$$
On the other hand, as $x^*\in A^*\widetilde{T}A(w),$ there exists $y^*\in Y^*,$ such that $y^*\in\widetilde{T}(Aw)$ and $x^*=A^*y^*$. Thus, for $y:=Aw$, we have $(y,y^*)\in G(\widetilde{T})=G(T)-(v,v^*),$ hence
$$(v,v^*)\in G(T)-(y,y^*).$$
In conclusion, $(v,v^*,u^*)\in G(T)\times R(S)-\im A\times \Delta_{Y^*}^{A^*}=D$ and, so, ${^{\ic} C} \subseteq D$.

If ${^{\ic} C}=\ri C$ is empty, then by Theorem \ref{t4.1} it holds ${^{\ic} D} = {^{\ic}(\co D)} = {^{\ic} C} = \ri C = \emptyset$. Consequently, $\ri D = \ri(\co D) = \emptyset$.

Assume now that ${^{\ic} C}$ is nonempty. Since ${^{\ic} C} \subseteq D \subseteq \co D \subseteq C$, one gets that ${^{\ic} D} = {^{\ic}(\co D)} = {^{\ic} C} = \ri C$. Moreover, it holds $\aff(^{\ic} C) = \aff C$ and, as $\ri C = ^{\ic} C \subseteq D \subseteq \co D \subseteq C$, we have $\aff C = \aff D$, these sets being closed. Thus $\ri C = \ri D = \ri(\co D)$ and this provides the desired conclusion.
\end{proof}

We close the section by the following characterization of the maximal monotonicity of Gossez type (D) of $S||_A T$, which follows from Theorem \ref{t3.2} and Theorem \ref{t4.2}.

\begin{cor}\label{c4.2} Let $S:X\rightrightarrows X^*$ and $T:Y\rightrightarrows Y^*$ be two maximal monotone operators of Gossez type (D) with strong representative functions $h_S$ and $h_T$, respectively, and $A:X\To Y$ a linear continuous mapping such that $\dom h_T \times \pr_{X^*}(\dom h_S) \cap \im A \times\Delta_{Y^*}^{A^*}\neq\emptyset$ and $D(\overline S) \subseteq X$. Then one has the following sequence of equivalencies
$$(0,0,0) \in {^{\ic}\left(G(T)\times R(S)-\im A \times \Delta_{Y^*}^{A^*}\right)}  \Leftrightarrow (0,0,0) \in \ri\left(G(T)\times R(S)-\im A \times \Delta_{Y^*}^{A^*}\right) \Leftrightarrow $$
$$(0,0,0) \in {^{\ic}\left(\co G(T)\times \co R(S)-\im A \times \Delta_{Y^*}^{A^*}\right)} \Leftrightarrow (0,0,0) \in \ri \left(\co G(T)\times \co R(S)-\im A \times \Delta_{Y^*}^{A^*}\right) \Leftrightarrow$$
$$(0,0,0) \in {^{\ic}\left(\dom h_T \times \pr\nolimits_{X^*}(\dom h_S)-\im A\times \Delta_{Y^*}^{A^*}\right)} \Leftrightarrow (0,0,0) \in \ri\left(\dom h_T \times \pr\nolimits_{X^*}(\dom h_S)-\im A\times \Delta_{Y^*}^{A^*}\right)$$
and each of these conditions guarantees that the  generalized parallel sum $S||_A T$ is a maximal monotone operator of Gossez type (D).
\end{cor}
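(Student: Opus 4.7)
The plan is to derive this corollary as an immediate consequence of the two preceding main results, Theorem \ref{t4.2} and Theorem \ref{t3.2}, without any additional work. The six conditions in the statement correspond exactly to the six sets appearing in the chain of equalities of Theorem \ref{t4.2}, so the equivalencies among them are not something to be proved anew; they are simply a rephrasing of the fact, already established in Theorem \ref{t4.2}, that
\[
{^{\ic} D} = \ri D = {^{\ic}(\co D)} = \ri(\co D) = {^{\ic} C} = \ri C,
\]
where $D = G(T)\times R(S)-\im A \times \Delta_{Y^*}^{A^*}$ and $C = \dom h_T \times \pr_{X^*}(\dom h_S)-\im A\times \Delta_{Y^*}^{A^*}$. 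Since $(0,0,0)$ belongs to one of these sets if and only if it belongs to all of them, the six equivalencies follow.

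Next, to obtain the maximal monotonicity assertion, I would invoke Theorem \ref{t3.2}(a). The hypothesis of that theorem, namely
\[
(0,0,0)\in {^{\ic}\left(\dom h_T \times \pr\nolimits_{X^*}(\dom h_S)-\im A\times \Delta_{Y^*}^{A^*}\right)},
\]
is precisely the last condition in the chain of equivalencies, hence is equivalent to each of the others. The hypotheses of Theorem \ref{t3.2} (that $S$ and $T$ are maximal monotone of Gossez type (D), that the feasibility condition holds, and that $D(\overline S) \subseteq X$) are assumed in the corollary, so Theorem \ref{t3.2} applies and yields that $S||_A T$ is maximal monotone of Gossez type (D).

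There is no genuine obstacle here: the substantive work (in particular the delicate step using \cite[Lemma 20.4(b)]{Sim} to prove the inclusion $\aff C \subseteq \cl(\aff D)$, and the use of Theorem \ref{t3.2} itself to prove ${^{\ic} C} \subseteq D$) has already been carried out in Theorems \ref{t4.1} and \ref{t4.2}. The corollary is thus essentially a bookkeeping statement, packaging the equivalencies from Theorem \ref{t4.2} together with the sufficient condition from Theorem \ref{t3.2} into a single convenient characterization.
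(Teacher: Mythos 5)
Your proposal is correct and matches the paper's own treatment: the authors likewise present Corollary \ref{c4.2} as an immediate consequence of Theorem \ref{t4.2} (whose chain of set equalities yields the six equivalencies) combined with Theorem \ref{t3.2} (whose condition (a) then gives the maximal monotonicity of Gossez type (D) of $S||_A T$). No additional argument is needed.
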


\section{Particular cases}

In this section we will consider two particular instances of the generalized parallel sum defined via a linear continuous mapping and show what the results provided in Section 3 become in these special settings.

\subsection{The maximal monotonicity of Gossez type (D) of $S||T$}

Assume that $X$ is a real nonzero Banach space and $S:X\rightrightarrows X^*$ and $T:X\rightrightarrows X^*$ are two monotone operators.  By taking $A=\id_X:X \rightarrow X$, their extended generalized parallel sum defined via $A$ and their generalized parallel sum defined via $A$ become the \textit{extended parallel sum of $S$ and $T$}
$$\overline{S}|| \overline{T}:X\rightrightarrows X^*,\overline{S}||\overline{T}(x):=(\overline{S}^{-1} + \overline{T}^{-1})^{-1}(x) \ \forall x \in X$$
and the \textit{classical parallel sum} of $S$ and $T$,
$$S||T:X\rightrightarrows {X^*},\,S|| T(x):= (S^{-1}+ T^{-1})^{-1}(x )\ \forall x \in X,$$
respectively.

Having $h_S:X\times X^*\To\overline{\Real}$ and $h_T:X\times X^*\To\overline{\Real}$ representative functions of $S$ and $T$, respectively, the extended infimal convolutions of them, namely $h_S\bigcirc_1^A h_T$ and $h_S^*\bigcirc_2^A h_T^*$, turn out to be the following classical bivariate infimal convolutions (see, for instance, \cite{Bo-Cse, voiseizal, Sim, simons-zal})
$$h_S \square_1 h_T : X \times X^* \rightarrow \oReal, (h_S\square_1 h_T)(x,x^*)=\inf \{h_S(u,x^*)+h_T(w,x^*):\, u,w \in X, u+w=x\}$$
and
$$h_S^*\square_2 h_T^* : X^* \times X^{**} \rightarrow \oReal, (h_S^*\square_2 h_T^*)(x^*,x^{**})=\inf \{h_S^*(x^*,u^{**})+h_T^*(x^*,w^{**}):\, u^{**},w^{**}\in X^{**}, u^{**}+w^{**}=x^{**}\},$$
respectively.

\begin{thm}\label{t5.1} Let $S:X\rightrightarrows X^*$ and $T:X\rightrightarrows X^*$ be two maximal monotone operators of Gossez type (D) with strong representative functions $h_S$ and $h_T$, respectively, such that $\pr_{X^*}(\dom h_S)\cap \pr_{X^*}(\dom h_T) \neq\emptyset$ and assume that one of the following conditions is fulfilled:
\begin{enumerate}
\item[(a)] $0 \in {^{\ic}(\pr_{X^*}(\dom h_S)-\pr_{X^*}(\dom h_T))}$;
\item[(b)] the set $\{(u^*,v^*,u^{**}+v^{**},r): r \in \Real, h_S^*(u^*,u^{**})+h_T^*(v^*,v^{**})\le r\}$ is closed regarding $\Delta_{X^*}\times X^{**}\times \Real$ in  $(X^*,w^*)\times (X^*,w^*)\times (X^{**},w^*)\times \Real$.
\end{enumerate}
Then the following statements are true:
\begin{enumerate}

\item[(i)] The function $h:X\times X^*\To\oReal,\, h(x,x^*) = \cl\nolimits_{\|\cdot\|\times\|\cdot\|_*}(h_S\square_1 h_T)(x,x^*)$, is a strong representative function of $\overline{S}||\overline{T}$ and the extended  parallel sum $\overline{S}|| \overline{T}$ is a maximal monotone operator of Gossez type (D).

\item[(ii)] If $D(\overline S) \subseteq X$ (or, if $D(\overline T) \subseteq X$), then the function $h:X\times X^*\To\oReal,\, h(x,x^*) = \cl\nolimits_{\|\cdot\|\times\|\cdot\|_*}(h_S\square_1 h_T)(x,x^*)$, is a strong representative function of $S || T$ and the parallel sum $S || T$ is a maximal monotone operator of Gossez type (D).
\end{enumerate}
\end{thm}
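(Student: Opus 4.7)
The plan is to derive Theorem \ref{t5.1} as a direct specialization of Theorem \ref{t3.1} (for (i)) and Theorem \ref{t3.2} (for (ii)) by taking $Y = X$ and $A = \id_X$. The preliminary check is that under this choice the data collapse correctly: one has $A^* = \id_{X^*}$, $A^{**} = \id_{X^{**}}$, $\im A = X$, $\im A^{**} = X^{**}$ and $\Delta_{Y^*}^{A^*} = \Delta_{X^*}$, so the generalized parallel sums $\overline{S}||_A\overline{T}$ and $S||_A T$ reduce to the classical $\overline{S}||\overline{T}$ and $S||T$, the extended bivariate infimal convolutions $h_S\bigcirc_1^A h_T$ and $h_S^*\bigcirc_2^A h_T^*$ become $h_S\square_1 h_T$ and $h_S^*\square_2 h_T^*$, and the strong representative function $h$ furnished by Theorem \ref{t3.1}/\ref{t3.2} is exactly the one asserted in Theorem \ref{t5.1}.

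I then translate the hypotheses. The feasibility condition $\dom h_T \times \pr\nolimits_{X^*}(\dom h_S) \cap (X \times \Delta_{X^*}) \neq \emptyset$ is plainly equivalent to $\pr\nolimits_{X^*}(\dom h_S) \cap \pr\nolimits_{X^*}(\dom h_T) \neq \emptyset$. For the interiority-type condition, a direct decomposition gives $\dom h_T \times \pr\nolimits_{X^*}(\dom h_S) - X \times \Delta_{X^*} = X \times \bigl(\pr\nolimits_{X^*}(\dom h_T) \times \pr\nolimits_{X^*}(\dom h_S) - \Delta_{X^*}\bigr)$, so the first factor absorbs the $X$-coordinate and the ${^{\ic}}$ condition on $(0,0,0)$ reduces to the ${^{\ic}}$ condition on $(0,0)$ in the parenthesized set; Lemma \ref{l2.1} applied with the identity on $X^*$ then converts this into $0 \in {^{\ic}(\pr\nolimits_{X^*}(\dom h_S) - \pr\nolimits_{X^*}(\dom h_T))}$, which is condition (a) of Theorem \ref{t5.1}. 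The closedness-type condition (b) transfers verbatim after substituting $A^* = \id_{X^*}$ and $A^{**} = \id_{X^{**}}$ into the set appearing in Theorem \ref{t3.1}(b)/\ref{t3.2}(b).

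With the hypotheses aligned, statement (i) is an immediate consequence of Theorem \ref{t3.1}. For (ii), the case $D(\overline{S}) \subseteq X$ is a direct application of Theorem \ref{t3.2}; the case $D(\overline{T}) \subseteq X$ I would treat by commutativity, since $S || T = T || S$ is immediate from the definition, the feasibility condition and both regularity conditions are symmetric in $S$ and $T$ (for (a), using that $0 \in {^{\ic}(-C)} \Leftrightarrow 0 \in {^{\ic}(C)}$ for convex $C$; for (b), by swapping the labels $(u^*,u^{**}) \leftrightarrow (v^*,v^{**})$), and $h_S \square_1 h_T = h_T \square_1 h_S$ by commutativity of infimal convolution, so that applying Theorem \ref{t3.2} to $T || S$ delivers the conclusion for $S || T$. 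The sole technical obstacle is the reduction of condition (a), where one must cleanly split off the free $X$-coordinate and then invoke Lemma \ref{l2.1} to eliminate the diagonal $\Delta_{X^*}$; the other steps are routine substitution.
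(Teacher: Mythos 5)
Your proposal is correct and follows essentially the same route as the paper: the paper's proof also specializes the general results for $S||_A T$ (Theorems \ref{t3.1} and \ref{t3.2}) to $A=\id_X$, splits off the free $X$-coordinate so that the interiority condition becomes $(0,0)\in{^{\ic}(\pr_{X^*}(\dom h_T)\times\pr_{X^*}(\dom h_S)-\Delta_{X^*})}$, and invokes Lemma \ref{l2.1} to obtain condition (a), the closedness condition transferring by direct substitution. Your explicit symmetry argument ($S||T=T||S$) for the case $D(\overline T)\subseteq X$ is a small detail the paper leaves implicit, and it is handled correctly.
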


\begin{proof}
The result follows directly Theorem \ref{t4.1} and Theorem \ref{t4.2}, by noticing that the interiority-type condition in these two statements becomes
$$(0,0,0)\in {^{\ic} (\dom h_T \times \pr\nolimits_{X^*}(\dom h_S)- X \times\Delta_{X^*})}  = X \times {^{\ic} (\pr\nolimits_{X^*}(\dom h_S) \times \pr\nolimits_{X^*}(\dom h_S)- \Delta_{X^*})}$$
or, equivalently,
$$(0,0)\in {^{\ic} (\pr\nolimits_{X^*}(\dom h_S) \times \pr\nolimits_{X^*}(\dom h_S)- \Delta_{X^*})}.$$
According to Lemma \ref{l2.1}, the latter relation is equivalent to
$$0 \in {^{\ic}(\pr\nolimits_{X^*}(\dom h_S)-\pr\nolimits_{X^*}(\dom h_T))}.$$
\end{proof}

The next result follows from Theorem \ref{t4.1} and Theorem \ref{t5.1}(i).

\begin{thm}\label{t5.2} Let $S:X\rightrightarrows X^*$ and $T:X\rightrightarrows X^*$ be two maximal monotone operators of Gossez type (D) with strong representative functions $h_S$ and $h_T$, respectively, such that $\pr_{X^*}(\dom h_S)\cap \pr_{X^*}(\dom h_T) \neq\emptyset$.
\begin{enumerate}
\item [(a)] Then it holds:
$${^{\ic}\left(R(S)-R(T)\right)} \subseteq {^{\ic}\left(\co R(S)-\co R(T)\right)} \subseteq$$
$${^{\ic}(\pr\nolimits_{X^*}(\dom h_S)-\pr\nolimits_{X^*}(\dom h_T))} = \ri(\pr\nolimits_{X^*}(\dom h_S)-\pr\nolimits_{X^*}(\dom h_T)).$$

\item [(b)] If $$0 \in {^{\ic}\left(R(S)-R(T)\right)}$$
or
$$0 \in {^{\ic}\left(\co R(S)-\co R(T)\right)},$$
then the extended parallel sum $\overline{S}|| \overline{T}$ is a maximal monotone operator of Gossez type (D).
\end{enumerate}
\end{thm}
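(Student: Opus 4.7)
The plan is to derive part (a) directly from Theorem \ref{t4.1} specialized to the case $Y = X$, $A = \id_X$, after which part (b) is immediate: combined with Theorem \ref{t5.1}(i)(a), the inclusion chain of part (a) shows that the hypothesis $0 \in {^{\ic}(R(S) - R(T))}$ (or its convex-hull variant) forces $0 \in {^{\ic}(\pr_{X^*}(\dom h_S) - \pr_{X^*}(\dom h_T))}$, which guarantees that $\overline{S}\|\overline{T}$ is maximal monotone of Gossez type (D).

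For part (a), applying Theorem \ref{t4.1} with $\im A = X$ and $\Delta_{Y^*}^{A^*} = \Delta_{X^*}$ yields a chain of inclusions ending with an equality, all of whose entries are subsets of $X \times X^* \times X^*$ of the form $W \times V - X \times \Delta_{X^*}$ for some $W \subseteq X \times X^*$ and $V \subseteq X^*$. A direct verification gives
$$W \times V - X \times \Delta_{X^*} = X \times \bigl(\pr\nolimits_{X^*}(W) \times V - \Delta_{X^*}\bigr),$$
so the $X$-factor can be split off (using ${^{\ic}(X \times C)} = X \times {^{\ic} C}$ and the analogous identity for $\ri$). Since $\pr_{X^*}(\co G(T)) = \co R(T)$, because projection commutes with convex hull, this reduces the chain to the corresponding one in $X^* \times X^*$, whose entries are $R(T) \times R(S) - \Delta_{X^*}$, $\co R(T) \times \co R(S) - \Delta_{X^*}$, and $\pr_{X^*}(\dom h_T) \times \pr_{X^*}(\dom h_S) - \Delta_{X^*}$, respectively.

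The second and more delicate step is to transfer this chain from $X^* \times X^*$ to $X^*$. The key geometric observation is that for any $P, Q \subseteq X^*$ one has $P \times Q - \Delta_{X^*} = \Psi(X^* \times (Q - P))$, where $\Psi : X^* \times X^* \to X^* \times X^*$, $\Psi(t, r) := (t, t + r)$, is a linear homeomorphism. From this identification one reads off that the affine hull of $P \times Q - \Delta_{X^*}$ is closed if and only if that of $Q - P$ is; combining Lemma \ref{l2.1} with a straightforward translation of the origin (replacing $U, V$ by $U - a$, $V - b$) then yields the equivalence $(a, b) \in {^{\ic}(P \times Q - \Delta_{X^*})}$ iff $b - a \in {^{\ic}(Q - P)}$, and the same device gives the analogous statement for $\ri$. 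Pushing the chain in $X^* \times X^*$ forward through the continuous linear surjection $\Pi(a, b) := b - a$ then produces exactly the desired chain in $X^*$ stated in (a).

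The main obstacle I anticipate is precisely this extension of Lemma \ref{l2.1} from a pointwise statement at $0$ to an identity of whole intrinsic cores, together with the matching identity for $\ri$ and the closedness of the affine hull. Once the homeomorphism $\Psi$ is identified, the argument is essentially a change of variables, but it is what converts the inclusion chain living in the product space into the one-variable chain asserted in (a); the derivation of (b) is then a direct appeal to Theorem \ref{t5.1}(i).
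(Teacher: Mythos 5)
Your proposal is correct, and its skeleton matches the paper's: part (b) is obtained exactly as in the paper, from (a) together with Theorem \ref{t5.1}(i), and the heart of (a) is Theorem \ref{t4.1} specialized to $Y=X$, $A=\id_X$ (do note explicitly that the feasibility hypothesis of Theorem \ref{t4.1} then reduces precisely to the assumed $\pr\nolimits_{X^*}(\dom h_S)\cap\pr\nolimits_{X^*}(\dom h_T)\neq\emptyset$). Where you genuinely differ is in how (a) is assembled. The paper proves the three relations separately: the first inclusion directly from the definition of the intrinsic relative algebraic interior (using that $\co R(S)-\co R(T)=\co(R(S)-R(T))$ has the same affine hull as $R(S)-R(T)$), the equality ${^{\ic}}=\ri$ by a fresh application of Z\u alinescu's Theorem 2.7.2 to the auxiliary function $\Phi(x,u,x^*,u^*)=h_S(x,x^*+u^*)+h_T(u,x^*)$, and the middle inclusion pointwise via Lemma \ref{l2.1} (after a translation) combined with Theorem \ref{t4.1}. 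You instead transfer the entire chain of Theorem \ref{t4.1} at once: split off the free $X$-factor and then use the linear homeomorphism $\Psi(t,r)=(t,t+r)$, which identifies $P\times Q-\Delta_{X^*}$ with $\Psi\bigl(X^*\times(Q-P)\bigr)$ and hence upgrades Lemma \ref{l2.1} from a statement at the origin to the set identities ${^{\ic}(P\times Q-\Delta_{X^*})}=\Psi\bigl(X^*\times{^{\ic}(Q-P)}\bigr)$ and its $\ri$ analogue; in particular the equality ${^{\ic}}=\ri$ in (a) comes for free from the one already present in Theorem \ref{t4.1}, so you avoid the separate appeal to Z\u alinescu's theorem. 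One point to fix in the write-up: Lemma \ref{l2.1} is stated for convex $U,V$, while in the first link of the chain the sets $R(T)$ and $R(S)$ need not be convex, so the equivalence $(a,b)\in{^{\ic}(P\times Q-\Delta_{X^*})}\Leftrightarrow b-a\in{^{\ic}(Q-P)}$ should not be justified there by Lemma \ref{l2.1}; your $\Psi$-identification, which requires no convexity, already yields it (alternatively, argue the first inclusion from the definition, as the paper does), so this is a matter of citing the right tool rather than a gap.
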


\begin{proof}
As (b) is a direct consequence of Theorem \ref{t5.1}(i) and statement (a), we will turn our attention to the proof of the latter. Concerning it, one can easily notice that the inclusion
$${^{\ic}\left(R(S)-R(T)\right)} \subseteq {^{\ic}\left(\co R(S)-\co R(T)\right)}$$
follows directly from the definition of the intrinsic relative algebraic interior, while the equality
$${^{\ic}(\pr\nolimits_{X^*}(\dom h_S)-\pr\nolimits_{X^*}(\dom h_T))} = \ri(\pr\nolimits_{X^*}(\dom h_S)-\pr\nolimits_{X^*}(\dom h_T))$$
is a direct consequence of \cite[Theorem 2.7.2]{Zal-carte}, applied to the proper, convex and lower semicontinuous function
$$\Phi : X \times X \times X^* \times X^* \rightarrow \oReal, \Phi(x,u,x^*,u^*) = h_S(x,x^*+u^*) + h_T(u,x^*),$$
by taking into account that (we consider the projection on the fourth component of the product space $ X \times X \times X^* \times X^*$)
$$\pr\nolimits_{X^*} (\dom \Phi) = \pr\nolimits_{X^*}(\dom h_S)-\pr\nolimits_{X^*}(\dom h_T).$$

What it remained to be shown, namely that
$${^{\ic}\left(\co R(S)-\co R(T)\right)} \subseteq {^{\ic}(\pr\nolimits_{X^*}(\dom h_S)-\pr\nolimits_{X^*}(\dom h_T))},$$
follows according to Lemma \ref{l2.1} and Theorem \ref{t4.1}. Indeed, when $u^* \in {^{\ic}\left(\co R(S)-\co R(T)\right)}$ or, equivalently, $0 \in {^{\ic}\left(\co R(S)-u^* - \co R(T)\right)}$, one has that
$$(u^*,0) \in {^{\ic}(\co R(T) \times \co R(S) -\Delta_{X^*})} \subseteq {^{\ic}(\pr\nolimits_{X^*}(\dom h_T) \times\pr\nolimits_{X^*}(\dom h_S)-\Delta_{X^*})}$$
and from here, again via Lemma \ref{l2.1}, it follows $u^* \in {^{\ic}(\pr\nolimits_{X^*}(\dom h_S)-\pr\nolimits_{X^*}(\dom h_T))}$.
\end{proof}

Theorem \ref{t4.2} and Theorem \ref{t5.1}(ii) give rise to the following result.

\begin{thm}\label{t5.3} Let $S:X\rightrightarrows X^*$ and $T:X\rightrightarrows X^*$ be two maximal monotone operators of Gossez type (D) with strong representative functions $h_S$ and $h_T$, respectively, such that $\pr_{X^*}(\dom h_S)\cap \pr_{X^*}(\dom h_T) \neq\emptyset$ and $D(\overline S) \subseteq X$ (or, $D(\overline T) \subseteq X$).
\begin{enumerate}
\item [(a)] Then it holds:
$${^{\ic}\left(R(S)-R(T)\right)}  = \ri \left(R(S)-R(T)\right) = {^{\ic}\left(\co R(S)-\co R(T)\right)} = \ri \left(\co R(S)-\co R(T)\right)=$$
$${^{\ic}(\pr\nolimits_{X^*}(\dom h_S)-\pr\nolimits_{X^*}(\dom h_T))} = \ri(\pr\nolimits_{X^*}(\dom h_S)-\pr\nolimits_{X^*}(\dom h_T)).$$
\item [(b)] One has the following sequence of equivalencies
$$0 \in {^{\ic}\left(R(S)-R(T)\right)}  \Leftrightarrow 0 \in \ri \left(R(S)-R(T)\right) \Leftrightarrow 0 \in {^{\ic}\left(\co R(S)-\co R(T)\right)} \Leftrightarrow$$ $$0 \in \ri \left(\co R(S)-\co R(T)\right) \Leftrightarrow 0 \in {^{\ic}(\pr\nolimits_{X^*}(\dom h_S)-\pr\nolimits_{X^*}(\dom h_T))} \Leftrightarrow 0 \in \ri(\pr\nolimits_{X^*}(\dom h_S)-\pr\nolimits_{X^*}(\dom h_T))$$
and each of these conditions guarantees that the parallel sum $S||T$ is a maximal monotone operator of Gossez type (D).
\end{enumerate}
\end{thm}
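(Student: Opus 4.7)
The plan is to derive Theorem \ref{t5.3} as the $A = \id_X$, $Y = X$ specialization of Theorem \ref{t4.2}, combined with Theorem \ref{t5.1}(ii) for the maximal monotonicity conclusion, in direct parallel to how Theorem \ref{t5.2} was derived from Theorem \ref{t4.1}.

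With $A = \id_X$ and $Y = X$, one has $\im A \times \Delta_{Y^*}^{A^*} = X \times \Delta_{X^*}$, so the feasibility hypothesis of Theorem \ref{t4.2} reduces to $\pr\nolimits_{X^*}(\dom h_S) \cap \pr\nolimits_{X^*}(\dom h_T) \neq \emptyset$, and each of the three sets entering the chain of equalities in Theorem \ref{t4.2} factors as $X \times K_i$ in $X \times X^* \times X^*$, with
$$K_1 = R(T) \times R(S) - \Delta_{X^*},\quad K_2 = \co R(T) \times \co R(S) - \Delta_{X^*},\quad K_3 = \pr\nolimits_{X^*}(\dom h_T) \times \pr\nolimits_{X^*}(\dom h_S) - \Delta_{X^*}.$$
Since taking the intrinsic core and the relative interior both commute with the Cartesian product with $X$, Theorem \ref{t4.2} yields that $^{\ic}(K_1)$, $\ri(K_1)$, $^{\ic}(K_2)$, $\ri(K_2)$, $^{\ic}(K_3)$, $\ri(K_3)$ all coincide. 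To transfer this into the desired equalities for
$$E_1 = R(S) - R(T),\quad E_2 = \co R(S) - \co R(T),\quad E_3 = \pr\nolimits_{X^*}(\dom h_S) - \pr\nolimits_{X^*}(\dom h_T),$$
I would apply Lemma \ref{l2.1} to the translated pair $(A_i, B_i - u^*)$, where $A_i$, $B_i$ are the two sets whose difference defines $E_i$, and use the identity $A_i \times (B_i - u^*) - \Delta_{X^*} = K_i - (0, u^*)$ to obtain the bijection $u^* \in {}^{\ic}E_i \iff (0, u^*) \in {}^{\ic}K_i$, and its counterpart for $\ri$.

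Part (b) then follows from (a) specialized to $u^* = 0$, and the maximal monotonicity conclusion is Theorem \ref{t5.1}(ii) applied to the condition $0 \in {}^{\ic}(\pr\nolimits_{X^*}(\dom h_S) - \pr\nolimits_{X^*}(\dom h_T))$, which, as noted in the proof of Theorem \ref{t5.1}, is precisely its hypothesis (a) under the $A = \id_X$ reduction.

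The main obstacle is the $\ri$-version of the bijection $u^* \in \ri E_i \iff (0, u^*) \in \ri K_i$: the $^{\ic}$-version follows at once from Lemma \ref{l2.1} applied to translated sets, but for $\ri$ one additionally needs that $\aff K_i$ is closed iff $\aff E_i$ is closed. Viewing $K_i = L^{-1}(E_i)$ for the continuous linear surjection $L:X^*\times X^* \to X^*$, $L(w^*,u^*) = u^* - w^*$, and noting that the null space $\Delta_{X^*}$ of $L$ is contained in the linear subspace parallel to $\aff K_i$ (because every fibre of $L$ over a point of $E_i$ is a translate of $\Delta_{X^*}$ that lies inside $K_i$), one obtains $\aff K_i = L^{-1}(\aff E_i)$, and the open mapping theorem then makes the closedness of these two affine hulls equivalent. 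Alternatively, one may bypass this subtlety entirely by invoking the built-in equality $^{\ic} = \ri$ for the lifted sets $X \times K_i$ delivered by Theorem \ref{t4.2} and reading off the $\ri$ statements from the $^{\ic}$ ones after separately checking non-emptiness.
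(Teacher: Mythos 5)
Your overall plan (specialize Theorem \ref{t4.2} to $A=\id_X$, strip off the factor $X$, descend from the lifted sets $K_i$ to the sets $E_i$, and finish with Theorem \ref{t5.1}(ii)) is workable, but the descent step has two concrete gaps as written. First, Lemma \ref{l2.1} is stated for \emph{convex} sets $U$ and $V$, and its proof uses the characterization of ${^{\ic}}$ via $\cone(D-x)$ being a closed linear subspace, which is only valid for convex $D$; you invoke it for $K_1=R(T)\times R(S)-\Delta_{X^*}$, but ranges of maximal monotone operators are not convex in general, so the lemma does not deliver the bijection $u^*\in{^{\ic}E_1}\Leftrightarrow(0,u^*)\in{^{\ic}K_1}$ (the statement is true, but it needs a direct argument, e.g. via your map $L$, using $\icr L^{-1}(E)=L^{-1}(\icr E)$ for arbitrary nonempty $E$). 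Second, and more seriously, the $\ri$-version of the bijection does \emph{not} follow from the ${^{\ic}}$-version together with the equivalence of closedness of the affine hulls: $\ri$ and ${^{\ic}}$ differ by $\rint$ versus $\icr$, and in an infinite-dimensional Banach space a convex set with closed affine hull can have nonempty $\icr$ and empty $\rint$ (take $\{x:|\phi(x)|\le 1\}$ for a discontinuous linear functional $\phi$). So besides $\aff K_i=L^{-1}(\aff E_i)$ and the quotient-map equivalence of closedness, which you argue correctly, you must also prove the transfer of relative topological interiors, $\rint K_i=L^{-1}(\rint E_i)$; this is in fact an easy two-line argument with your $L$ once $\aff K_i=L^{-1}(\aff E_i)$ is known, but it is precisely the step you omit, and your proposed bypass (nonemptiness plus the built-in equality ${^{\ic}}=\ri$ for the lifted sets) does not supply it, since nonemptiness of ${^{\ic}E_i}$ gives no information about $\rint E_i$.

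For comparison, the paper's proof avoids both issues. It applies Lemma \ref{l2.1} only to the convex set $E_3=\pr\nolimits_{X^*}(\dom h_S)-\pr\nolimits_{X^*}(\dom h_T)$, uses Theorem \ref{t4.2} to obtain the single inclusion ${^{\ic}E_3}\subseteq R(S)-R(T)$, and then concludes via the sandwich ${^{\ic}E_3}\subseteq R(S)-R(T)\subseteq\co R(S)-\co R(T)\subseteq E_3$, combined with ${^{\ic}E_3}=\ri E_3$ (available from Theorem \ref{t5.2}(a), i.e. \cite[Theorem 2.7.2]{Zal-carte}) and the equality of the affine hulls; sandwiching $E_1$ and $E_2$ between $\ri E_3$ and $E_3$ transfers all the ${^{\ic}}$- and $\ri$-equalities at once, with no relative-interior transfer along $L$ and no use of Lemma \ref{l2.1} on nonconvex sets. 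If you add the two missing ingredients (a direct $\icr$-transfer argument for the nonconvex case and the $\rint$-transfer along $L$), your route goes through; as it stands, the argument is incomplete at exactly those points.
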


\begin{proof}
We will only prove statement (a), as (b) is a direct consequence of it and Theorem \ref{t5.1}(ii).

For an arbitrary $u^* \in {^{\ic}(\pr\nolimits_{X^*}(\dom h_S)-\pr\nolimits_{X^*}(\dom h_T))}$ one has, via Lemma \ref{l2.1}, that
$$(u^*,0) \in  {^{\ic}(\pr\nolimits_{X^*}(\dom h_T) \times\pr\nolimits_{X^*}(\dom h_S)-\Delta_{X^*})}.$$  Further, by Theorem \ref{t4.2} it follows $(u^*,0) \in (R(T) \times R(S)-\Delta_{X^*})$,
implying that $u^* \in R(S)-R(T)$. Consequently,
$${^{\ic}(\pr\nolimits_{X^*}(\dom h_S)-\pr\nolimits_{X^*}(\dom h_T))} \subseteq R(S)-R(T).$$
If ${^{\ic}(\pr\nolimits_{X^*}(\dom h_S)-\pr\nolimits_{X^*}(\dom h_T))}$ is empty, then there is nothing to be proved. Otherwise, the conclusion follows, by using that $${^{\ic}(\pr\nolimits_{X^*}(\dom h_S)-\pr\nolimits_{X^*}(\dom h_T))} \subseteq R(S)-R(T) \subseteq \co R(S) - \co R(T) \subseteq \pr\nolimits_{X^*}(\dom h_S)-\pr\nolimits_{X^*}(\dom h_T)$$ and $\aff ({^{\ic}(\pr\nolimits_{X^*}(\dom h_S)-\pr\nolimits_{X^*}(\dom h_T))}) = \aff (\pr\nolimits_{X^*}(\dom h_S)-\pr\nolimits_{X^*}(\dom h_T))$.
\end{proof}

\begin{rmk}\label{r5.2}\rm In the setting of reflexive Banach spaces several interority-type regularity conditions ensuring the maximal monotonicity of the parallel sum $S||T$ of two maximal monotone operators $S$ and $T$ have been introduced in the literature. While in \cite{At-Ch-Mou}  the condition
\begin{equation*}\label{e1}
\inte(R(S))\cap R(T)\neq\emptyset
\end{equation*}
was considered, in \cite{Ri} it has been assumed that
\begin{equation*}\label{e4}
\cone(R(S)- R(T))=X^*.
\end{equation*}
Further, in a Hilbert space context, in \cite{Mou} the condition
\begin{equation*}\label{e2}
\cone(R(S)- R(T)) \ \mbox{is a closed linear subspace of} \ X^*
\end{equation*}
has been stated, while in \cite{Pen-Zal}, in reflexive Banach spaces, the condition
\begin{equation*}\label{e3}
\cone(\co R(S)- \co R(T)) \ \mbox{is a closed linear subspace of} \ X^*
\end{equation*}
was proposed.

Taking into account that an operator $T:X \rightrightarrows X^*$ is maximal monotone if and only if $T^{-1}:X^* \rightrightarrows X$ is maximal monotone and that $D(T^{-1})=R(T)$, one can easily observe that all these interiority-type regularity conditions ensuring that $S||T$ is maximal monotone, provided $S$ and $T$ are maximal monotone, are the counterpart of some meanwhile classical ones stated for the maximal monotonicity of the sum $S^{-1}+T^{-1}$ (see, for instance, \cite{Sim, Pen-Zal1, simons-zal}) and can be easily derived from them.

For interiority-type regularity conditions guaranteeing the maximal monotonicity of Gossez type (D) of the parallel sum and the extended parallel sum of two maximal monotone operators of Gossez type (D) in general Banach spaces we refer to \cite{Sim3}. These results have been obtained as particular instances of some corresponding ones formulated for the generalized parallel sum defined via a linear continuous mapping $S||^A T$.
\end{rmk}

\begin{ex}\label{ex5.1}\rm With this example we want to emphasize that there exist maximal monotone operators with a maximal monotone parallel sum and for which the interiority-type regularity condition (a) in Theorem \ref{t5.1} is not fulfilled, while the closedness-type condition (b) in Theorem \ref{t5.1} holds.

Consider the proper, sublinear and lower semicontinuous functions $f,g:\Real^2 \To \oReal$, $f(x_1,x_2)=\|(x_1,x_2)\|_2+\delta_{\Real^2_+}(x_1,x_2),$ where $\|\cdot\|_2$ denotes the Euclidean norm on $\Real^2$, and $g(x_1,x_2) = \sqrt{3}/2 x_1+ 1/2 x_2+\delta_{-\Real_+^2}(x_1,x_2)$. Then the multivalued operators $S:=\partial f$ and $T:=\partial g$ are maximal monotone and their \textit{only} representative functions are $h_S((x_1,x_2),(x_1^*,x_2^*))=f(x_1,x_2)+f^*(x_1^*,x_2^*)$ and $h_T((x_1,x_2),(x_1^*,x_2^*))=g(x_1,x_2)+g^*(x_1^*,x_2^*)$, respectively. One can easily verify  that $f^*=\delta_{\cl B_{\Real^2}-\Real^2_+},$ where $B_{\Real^2}$ denotes the open unit ball of $\Real^2$, and $g^*=\delta_{[\sqrt{3}/{2},+\infty)\times [1/2,+\infty)}$.

Obviously,
$$\pr\nolimits_{\Real^2}(\dom h_S)\cap \pr\nolimits_{\Real^2}(\dom h_T)=(\cl B_{\Real^2}-\Real^2_+)\cap [\sqrt{3}/{2},+\infty)\times [1/2,+\infty)\neq\emptyset,$$
where the projection is taken onto the second component of the product space $\Real^2 \times \Real^2$.

We also have
$$\{(u^{*},v^{*},u^{**}+v^{**},r)\in \Real^2 \times \Real^2 \times \Real^2 \times \Real: h_S^*(u^*,u^{**})+h_T^*(v^*,v^{**})\le r\}=$$
$$\left(\cl B_{\Real^2}-\Real^2_+\right)\times [\sqrt{3}/{2},+\infty)\times [1/2,+\infty) \times \{(x_1+y_1,x_2+y_2,r)\in\Real^2\times\Real: \|(x_1,x_2)\|_2+\sqrt{3}/{2} y_1+1/2 y_2\le r\},$$ which is obviously a closed set. Hence, condition (b) in Theorem \ref{t5.1} is fulfilled and $S||T$ is maximal monotone.

On the other hand, one can notice that condition (a) in Theorem \ref{t5.1} fails. Otherwise, one would have according to Theorem \ref{t5.3}(b) that
$$(0,0) \in \ri(\pr\nolimits_{\Real^2}(\dom h_S) - \pr\nolimits_{\Real^2}(\dom h_T))$$
or, equivalently,
$$\left(B_{\Real^2} -\inte \Real^2_+\right) \cap (\sqrt{3}/{2},+\infty) \times (1/2,+\infty) \neq \emptyset,$$
which would lead to a contradiction.
\end{ex}

\subsection{The maximal monotonicity of Gossez type (D) of $A^*TA$}

For the second particular instance, we treat in this section, we stay in the same setting as in Section 3, but assume that $S:X \rightrightarrows X^*$ is the multivalued operator with $G(S) = \{0\} \times X^*$, which is obviously maximal monotone of Gossez type (D). Its extension to the bidual, $\overline S : X^{**} \rightrightarrows X^*$, fulfills $G(\overline S) = \{0\} \times X^*$, which means that the extended generalized parallel sum $\overline S||_A \overline T$ and the generalized parallel sum $S||_A T$ coincide (see also the proof of Theorem \ref{t3.2}) and fulfill
$$\overline S||_A \overline T (x) = S||_A T(x) = A^*TA(x) \ \forall x \in X.$$
Since $\varphi_S = \psi_S = \delta_{ \{0\} \times X^*}$, by Proposition \ref{frepr} it follows that the only representative function of $S$ is $h_S = \delta_{ \{0\} \times X^*}$. Since $h_S^* = \delta_{X^* \times \{0\}}$, $h_S$ is actually a strong representative function of $S$.

Having $h_T:Y\times Y^*\To\overline{\Real}$ a representative function $T$, the extended infimal convolutions $h_S\bigcirc_1^A h_T$ and $h_S^*\bigcirc_2^A h_T^*$ of $h_S$ and $h_T$ become in this situation
$$h_T^A : X \times X^* \rightarrow \oReal, h_T^A(x,x^*)=\inf \{h_T(Ax,v^*):v^* \in Y^*, A^*v^*=x^*\}$$
and
$$h_T^{*A} : X^* \times X^{**} \rightarrow \oReal, h_T^{*A}(x^*,x^{**})=\inf \{h_T^*(v^*,A^{**}x^{**}):\, v^* \in Y^*, A^*v^*=x^*\},$$
respectively.

Noticing that $\dom h_T \times \pr_{X^*}(\dom h_S) - \im A\times \Delta_{Y^*}^{A^*}=(\pr_{Y}(\dom h_T)-\im A)\times Y^*\times X^*$, Theorem \ref{t3.2} gives rise to the following result.

\begin{thm}\label{t5.4} Let $T:Y\rightrightarrows Y^*$ be a maximal monotone operators of Gossez type (D) with strong representative function $h_T$ and $A:X\To Y$ a linear continuous mapping such that $\pr_{Y}(\dom h_T) \cap \im A \neq \emptyset$. Assume that one of the following conditions is fulfilled:
\begin{enumerate}
\item[(a)] $0\in {^{\ic}(\pr_{Y}(\dom h_T)-\im A)}$;
\item[(b)] the set $\{(A^*v^*,v^{**},r): r \in \Real, h_T^*(v^*,v^{**})\le r\}$ is closed regarding $X^* \times \im A^{**}\times \Real$ in  $(X^*,w^*)\times (Y^{**},w^*)\times \Real$.
\end{enumerate}
Then the function $h:X\times X^*\To\oReal,\, h(x,x^*)= \cl\nolimits_{\|\cdot\|\times\|\cdot\|_*} h_T^A (x,x^*)$, is a strong representative function of $A^*TA$ and  $A^*TA$ is a maximal monotone operator of Gossez type (D).
\end{thm}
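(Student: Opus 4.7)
The plan is to derive this statement as a direct specialization of Theorem \ref{t3.2} to the trivial operator $S:X\rightrightarrows X^*$ with graph $G(S)=\{0\}\times X^*$, exactly as announced in the preamble of this subsection. The role of this reduction is to replace the ``external'' operator $S$ in $S||_A T$ by a choice that makes the generalized parallel sum collapse to $A^*TA$, so that all quantities appearing in Theorem \ref{t3.2} simplify enough to match the cleaner hypotheses of the present statement.

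First I would verify the hypotheses of Theorem \ref{t3.2} for this special $S$. Monotonicity and maximality are immediate, and both the Fitzpatrick function and $\psi_S$ coincide with $\delta_{\{0\}\times X^*}$; Proposition \ref{frepr} then forces $h_S=\delta_{\{0\}\times X^*}$ as the unique representative of $S$, whose conjugate $h_S^*=\delta_{X^*\times\{0\}}$ trivially satisfies the inequality in Theorem \ref{th-gos}(c), so $S$ is of Gossez type (D) and $h_S$ is strong. Since $\overline S$ still has graph $\{0\}\times X^*$, we have $D(\overline S)=\{0\}\subseteq X$, and the final identification in the proof of Theorem \ref{t3.2} combined with $\overline S^{-1}\equiv\{0\}$ yields $\overline S||_A\overline T=S||_AT=A^*TA$.

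Next I would translate the two regularity conditions. Because $\pr_{X^*}(\dom h_S)=X^*$, a direct algebraic computation gives
\[
\dom h_T\times\pr\nolimits_{X^*}(\dom h_S)-\im A\times\Delta_{Y^*}^{A^*}=\bigl(\pr\nolimits_Y(\dom h_T)-\im A\bigr)\times Y^*\times X^*,
\]
so the feasibility assumption reduces to $\pr_Y(\dom h_T)\cap\im A\neq\emptyset$ and condition (a) of Theorem \ref{t3.2} reduces to condition (a) here, since the intrinsic relative algebraic interior factors across a product with a whole space. For condition (b), substituting $h_S^*=\delta_{X^*\times\{0\}}$ forces $u^{**}=0$ and leaves $u^*$ unrestricted, so the set in Theorem \ref{t3.2}(b) takes the product form $X^*\times E'$ with $E'=\{(A^*v^*,v^{**},r):h_T^*(v^*,v^{**})\le r\}$. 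Its weak$^*$ closure equals $X^*\times\cl E'$, and intersecting with $\Delta_{X^*}\times\im A^{**}\times\Real$ shows that closedness of $X^*\times E'$ regarding $\Delta_{X^*}\times\im A^{**}\times\Real$ is equivalent to closedness of $E'$ regarding $X^*\times\im A^{**}\times\Real$ in $(X^*,w^*)\times(Y^{**},w^*)\times\Real$, which is condition (b) here.

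Finally I would identify the strong representative produced by Theorem \ref{t3.2}. Since $h_S(u,x^*)=+\infty$ unless $u=0$, the extended bivariate infimal convolution collapses to
\[
(h_S\bigcirc\nolimits_1^A h_T)(x,x^*)=\inf\{h_T(Ax,v^*):v^*\in Y^*,\,A^*v^*=x^*\}=h_T^A(x,x^*),
\]
so the function $\cl\nolimits_{\|\cdot\|\times\|\cdot\|_*}(h_S\bigcirc_1^A h_T)$ delivered by Theorem \ref{t3.2} is precisely $h=\cl\nolimits_{\|\cdot\|\times\|\cdot\|_*}h_T^A$. The theorem then immediately asserts that $h$ is a strong representative of $\overline S||_A\overline T=A^*TA$ and that $A^*TA$ is maximal monotone of Gossez type (D). I expect the only mildly delicate point of the whole argument to be the product-space weak$^*$ closure reduction used to rewrite condition (b); everything else amounts to bookkeeping substitutions.
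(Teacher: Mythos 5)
Your proposal is correct and follows exactly the paper's route: the paper derives Theorem \ref{t5.4} from Theorem \ref{t3.2} by taking $G(S)=\{0\}\times X^*$, noting $h_S=\delta_{\{0\}\times X^*}$, $h_S^*=\delta_{X^*\times\{0\}}$, $G(\overline S)=\{0\}\times X^*$, $\overline S||_A\overline T=S||_AT=A^*TA$, and the identity $\dom h_T\times\pr_{X^*}(\dom h_S)-\im A\times\Delta_{Y^*}^{A^*}=(\pr_Y(\dom h_T)-\im A)\times Y^*\times X^*$. Your extra verification that the closedness condition (b) of Theorem \ref{t3.2} reduces, via the product structure $X^*\times E'$, to condition (b) of the statement is a correct filling-in of a detail the paper leaves implicit.
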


Since $G(T)\times R(S)-\im A\times \Delta_{Y^*}^{A^*}=(D(T)-\im A)\times Y^*\times X^*$, via Theorem \ref{t4.2} and Corollary \ref{c4.2} we obtain the following statement.

\begin{thm}\label{t5.5} Let $T:Y\rightrightarrows Y^*$ be a maximal monotone operators of Gossez type (D) with strong representative function $h_T$ and $A:X\To Y$ a linear continuous mapping such that $\pr_{Y}(\dom h_T) \cap \im A \neq \emptyset$.
\begin{enumerate}
\item [(a)] Then it holds:
$${^{\ic}\left(D(T)-\im A\right)}  = \ri \left(D(T)-\im A\right) = {^{\ic}\left(\co D(T) -\im A\right)} = \ri \left(\co D(T) -\im A\right)=$$
$${^{\ic}(\pr\nolimits_{Y}(\dom h_T)-\im A)} = \ri(\pr\nolimits_{Y}(\dom h_T)-\im A).$$
\item [(b)] One has the following sequence of equivalencies
$$0 \in {^{\ic}\left(D(T)-\im A\right)}  \Leftrightarrow 0 \in \ri \left(D(T)-\im A\right) \Leftrightarrow 0 \in {^{\ic}\left(\co D(T) -\im A\right)} \Leftrightarrow$$ $$0 \in \ri \left(\co D(T) -\im A\right) \Leftrightarrow 0 \in {^{\ic}(\pr\nolimits_{Y}(\dom h_T)-\im A)} \Leftrightarrow 0 \in \ri(\pr\nolimits_{Y}(\dom h_T)-\im A)$$
and each of these conditions guarantees that $A^*TA$ is a maximal monotone operator of Gossez type (D).
\end{enumerate}
\end{thm}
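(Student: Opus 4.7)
The plan is to specialize Theorem \ref{t4.2} and Corollary \ref{c4.2} to the particular operator $S:X\rightrightarrows X^*$ with $G(S)=\{0\}\times X^*$ introduced at the beginning of Section 4.2, and then read off the six equalities of part (a) from the product structure of the sets involved. This $S$ is maximal monotone of Gossez type (D) with $\overline S=S$, so that $D(\overline S)=\{0\}\subseteq X$, and its unique strong representative function is $h_S=\delta_{\{0\}\times X^*}$. Consequently $\pr_{X^*}(\dom h_S)=X^*$ and $R(S)=\co R(S)=X^*$, while the feasibility hypothesis $\dom h_T\times \pr_{X^*}(\dom h_S)\cap \im A\times \Delta_{Y^*}^{A^*}\neq\emptyset$ of Theorem \ref{t4.2} reduces exactly to the assumed $\pr_Y(\dom h_T)\cap \im A\neq\emptyset$.

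The key observation is that in each of the three sets appearing in Theorem \ref{t4.2} the last two Cartesian factors collapse to the whole ambient space. Indeed, since $v^*\in Y^*$ ranges freely and $x^*\in X^*=R(S)$ is free, a short direct computation gives
\begin{equation*}
G(T)\times R(S)-\im A\times \Delta_{Y^*}^{A^*}=(D(T)-\im A)\times Y^*\times X^*,
\end{equation*}
and analogously for the convex-hull and $\dom h_T$ variants, producing $\co D(T)-\im A$ and $\pr_Y(\dom h_T)-\im A$ respectively in the first coordinate. I would then invoke the elementary product formulas
\begin{equation*}
{^{\ic}(E\times Z_2)}={^{\ic} E}\times Z_2,\quad \ri(E\times Z_2)=\ri E\times Z_2,
\end{equation*}
valid whenever $Z_2$ is the whole ambient linear space, for then the closed-affine-hull requirement and the segment test on directions $(x,0)$ both factor through cleanly, even without convexity of $E$. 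Combining these with the six equalities in Theorem \ref{t4.2} yields the six equalities of part (a).

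Part (b) is then immediate: the equivalence of all listed conditions is just part (a) evaluated at the origin, and the implication to maximal monotonicity of Gossez type (D) of $A^*TA$ is the content of Theorem \ref{t5.4}(a), whose condition $0\in {^{\ic}(\pr_Y(\dom h_T)-\im A)}$ is precisely the last entry in the equivalence chain. The only delicate step I anticipate is the product formulas for ${^{\ic}}$ and $\ri$ applied to the possibly non-convex first factor $E=D(T)-\im A$; once one notices that the second factor is the full space $Y^*\times X^*$, however, the definitions recalled in the preliminaries go through unchanged.
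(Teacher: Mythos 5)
Your proposal is correct and follows essentially the same route as the paper, which obtains Theorem \ref{t5.5} by specializing Theorem \ref{t4.2} and Corollary \ref{c4.2} to the operator $S$ with $G(S)=\{0\}\times X^*$ and using the factorization $G(T)\times R(S)-\im A\times \Delta_{Y^*}^{A^*}=(D(T)-\im A)\times Y^*\times X^*$ (and its analogues), so that ${^{\ic}}$ and $\ri$ factor through the full-space components. The extra details you supply (the reduction of the feasibility condition, $D(\overline S)=\{0\}\subseteq X$, and the product formulas for ${^{\ic}}$ and $\ri$ with a full-space factor, valid without convexity of the first factor) are exactly the points the paper leaves implicit, and they check out.
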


\begin{rmk}\label{r5.3}\rm
Using as a starting point Theorem \ref{t5.4} and Theorem \ref{t5.5} and by employing the techniques used in \cite{glr}, one can further provide interiority- and closedness-type regularity conditions for the maximal monotonicity of Gossez type (D) of the sum of two maximal monotone operators of Gossez type (D), but also for the sum of a maximal monotone operator of Gossez type (D) with the composition of another maximal monotone operator of Gossez type (D) with a linear continuous mapping (for the latter one will thereby rediscover the statements given in \cite[Theorem 16]{voiseizal}).
\end{rmk}

\noindent{\bf Acknowledgements.} The authors are thankful to E.R. Csetnek for pertinent comments and suggestions on an earlier draft of the article.

\end{document}